\newtheorem*{claim*}{Claim}
\newtheorem*{fact*}{Fact}
\newtheorem{theorem}{Theorem}[section]
\newtheorem*{theorem*}{Theorem}
\newtheorem{proposition}[theorem]{Proposition}
\newtheorem{lemma}[theorem]{Lemma}
\newtheorem*{lemma*}{Lemma}
\newtheorem{corollary}[theorem]{Corollary}
\newtheorem{conjecture}[theorem]{Conjecture}
\theoremstyle{definition}
\newtheorem{definition}[theorem]{Definition}
\newtheorem{example}[theorem]{Example}
\newtheorem*{question*}{Question}
\theoremstyle{remark}
\newtheorem*{remark}{Remark}
\def\c{\mathcal}
\def\NN{\mathbb{N}}
\def\ZZ{\mathbb{Z}}
\newcommand{\abs}[1]{\left\lvert#1\right\rvert}
\newcommand{\floor}[1]{\left\lfloor #1 \right\rfloor}
\def\({\left(}
\def\){\right)}
\def\vphi{\varphi}
\def\<={\Leftarrow}
\def\=>{\Rightarrow}
\title{Constructing Numerical Semigroups of a Given Genus}
\author{Yufei Zhao}
\date{\today}
\address{Department of Mathematics, Massachusetts Institute of Technology, Cambridge, Massachusetts}
\email{yufei.zhao@gmail.com}
\begin{document}

\begin{abstract}
Let $n_g$ denote the number of numerical semigroups of genus $g$. Bras-Amor\'os conjectured that $n_g$ possesses certain Fibonacci-like properties. Almost all previous attempts at proving this conjecture were based on analyzing the semigroup tree. We offer a new, simpler approach to counting numerical semigroups of a given genus. Our method gives direct constructions of families of numerical semigroups, without referring to the generators or the semigroup tree. In particular, we give an improved asymptotic lower bound for $n_g$.
\end{abstract}

\maketitle

\section{Introduction}

A \emph{numerical semigroup} is a subset $\Lambda$ of the non-negative integers that is closed under addition, contains $0$, and has finite complement in $\NN_0$. The size of $\NN_0 \setminus \Lambda$ is called the \emph{genus}, denoted $g = g(\Lambda)$. The smallest nonzero element of $\Lambda$ is called its \emph{multiplicity}, denoted $m = m(\Lambda)$. The largest element of $\NN_0 \setminus \Lambda$ is called the \emph{Frobenius number}, denoted $f = f(\Lambda)$.

Let $n_g$ denote the number of numerical semigroups of genus $g$. The sequence $n_g$ appears as entry A007323 in the Sloane's On-line Encyclopedia of Integer Sequences \cite{Sloane}. After computing the first 50 terms of the sequence, Bras-Amor\'os \cite{B08} observed a Fibonacci-like behavior and made the following conjecture. Here $\vphi = \frac{1 + \sqrt 5}2$ is the golden ratio.

\begin{conjecture}[Bras-Amor\'os] \label{conj:B}
$\displaystyle \lim_{g \to \infty} \frac{n_g}{n_{g-1}} = \vphi$ 
\quad and \quad
$\displaystyle \lim_{g \to \infty} \frac{n_{g-1} + n_{g-2}}{n_g} = 1$.
\end{conjecture}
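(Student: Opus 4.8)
The plan is to deduce both limits from a single sharp asymptotic: that there is a constant $S > 0$ with $n_g \sim S\varphi^g$. Granting this, write $n_g = S\varphi^g(1 + o(1))$; then $n_g/n_{g-1} \to \varphi$ at once, and since $n_{g-2}/n_g = (n_{g-2}/n_{g-1})(n_{g-1}/n_g) \to \varphi^{-2}$, the relation $\varphi^{-1} + \varphi^{-2} = 1$ (equivalent to $\varphi^2 = \varphi + 1$) gives $(n_{g-1} + n_{g-2})/n_g \to 1$. In fact the first limit already forces the second, so the genuine content is the convergence of the ratio $n_g/n_{g-1}$ to $\varphi$, which I would obtain through the asymptotic $n_g \sim S\varphi^g$.

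To reach that asymptotic I would stratify semigroups by \emph{depth} $q = \ceil{(f+1)/m}$. A semigroup of genus $g$ and multiplicity $m$ always has $\{1,\dots,m-1\}$ among its gaps and contains all integers $\ge qm$. For depth $q \le 2$ the structure is rigid: any subset $A \subseteq (m,2m)$ produces a valid semigroup $\{0,m\} \cup A \cup [2m,\infty)$, since every pairwise sum lands in $[2m,\infty)$. Its genus is $g = 2(m-1) - \abs{A}$, so the number of depth-$\le 2$ semigroups of genus $g$ is
\[ \sum_{m}\binom{m-1}{\,2(m-1)-g\,} \;=\; \sum_{i\ge 0}\binom{g-i}{i} \;=\; F_{g+1}, \]
the Fibonacci number (with $F_1 = F_2 = 1$), by the classical diagonal-sum identity. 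As $F_{g+1}\sim\varphi^{g+1}/\sqrt5$, this recovers the paper's lower bound and signals the golden-ratio growth.

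The crux is the contribution of depth $\ge 3$, which, unlike in the lower bound, cannot be discarded: the depth-$\le 2$ family is only a constant fraction of $n_g$ bounded away from $1$, so most semigroups of large genus have depth exactly $3$. I would encode a depth-$3$ semigroup of multiplicity $m$ by the pair $A = \Lambda \cap (m,2m)$ and $B = \Lambda \cap [2m,3m)$, subject to the single closure constraint $B \supseteq (A+A)\cap[2m,3m)$ (sums meeting the block $[2m,3m)$ from above exceed $3m$ and are automatic). The genus is $g = 3m - 2 - \abs{A} - \abs{B}$, so the enumeration becomes a sum over $A$ of the number of admissible $B$ of each size, a quantity governed by the sumset size $\abs{(A+A)\cap[2m,3m)}$ as $A$ ranges over subsets of an interval --- an additive-combinatorics problem. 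I would aim to show this count is $\sim c\,\varphi^g$ for an explicit $c$ by isolating the dominant ``shape'' of $(A,B)$, and separately to show that each extra layer in depth $\ge 4$ imposes enough further containment conditions to push its count below $\alpha^g$ for some $\alpha < \varphi$, hence negligible. Summing the depth-$2$, depth-$3$, and higher contributions would yield $n_g \sim S\varphi^g$.

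The main obstacle is precisely this depth-$3$ enumeration, together with upgrading inequalities to a true limit. The constructive method here gives only $n_g \ge F_{g+1}$, i.e.\ $\liminf_g n_g^{1/g}\ge\varphi$; it is silent on a matching upper bound and leaves open whether $n_g/n_{g-1}$ oscillates. Excluding oscillation requires estimating the depth-$3$ count sharply enough that its error is $o(\varphi^g)$ --- ideally a strictly smaller exponential, so that the sequence obeys a Fibonacci-type recursion up to summable relative error and $n_g/\varphi^g$ is forced to converge. This sumset-distribution estimate is the genuine difficulty, and it is exactly what the direct constructions of the present paper do not supply.
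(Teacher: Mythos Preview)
The statement you are attempting is labeled a \emph{conjecture} in the paper, and the paper does not prove it. There is no proof to compare against; the paper offers partial progress and formulates further conjectures whose combination would imply this one.

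Your proposed attack --- stratifying by depth $q = \lceil (f+1)/m \rceil$, computing the depth-$\le 2$ count exactly as $F_{g+1}$ via the diagonal Fibonacci identity, then analyzing depth $3$ by encoding the two blocks $A,B$ subject to $B \supseteq (A+A)\cap[2m,3m)$ --- is essentially the paper's own method. Your depth-$\le 2$ computation is Section~\ref{sec:f<2m}; your depth-$3$ framework is Section~\ref{sec:f<3m}, where the same encoding appears as the ``type'' $(A;k)$ and it is shown that $t_g\varphi^{-g}$ converges to an explicit infinite sum (Theorem~\ref{thm:t_g}), whose finiteness is itself left open (Conjecture~\ref{conj:sum-converge}). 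The paper then \emph{conjectures} (Conjecture~\ref{conj:f>3m}) that the semigroups with $f > 3m$ are asymptotically negligible.

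Your closing paragraph is therefore exactly right: the two missing ingredients are (i) a proof that the depth-$\ge 4$ contribution is $o(\varphi^g)$ and (ii) control sufficient to exclude oscillation of $n_g\varphi^{-g}$. These are precisely the open Conjectures~\ref{conj:f>3m} and~\ref{conj:sum-converge}. Your proposal is not a proof but a correct diagnosis of where the argument stands and what remains; it reproduces the paper's strategy and identifies the same gap the paper leaves open.
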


Note that the first claim implies the second. It was also conjectured \cite{B08} that $n_g \geq n_{g-1} + n_{g-2}$, although we will not discuss this conjecture in this paper. It is not even known whether the sequence $n_g$ is increasing.

Almost all previous bounds on $n_g$ were obtained using the semigroup tree \cite{B09, BB, Eli}. In this paper we offer a new approach to attacking the conjecture. Our method is arguably simpler than the semigroup tree method, as it provides direct constructions of numerical semigroups viewed as sets of integers, without referring to the generators.

Let $F_n$ denote the Fibonacci numbers, defined by $F_1=F_2=1$, $F_{n+2} = F_{n+1} + F_n$ for $n\geq 1$. To simplify our notation, let $F_n = 0$ for any $n \leq 0$ (though this is not the usual convention). Using the semigroup tree method, Bras-Amor\'os \cite{B09} showed that
\[
	2F_g \leq n_g \leq 1 + 3 \cdot 2^{g-3},
\]
improving a previous upper bound of $\frac{1}{g+1}\binom{2g}{g}$ obtained using Dyck paths \cite{BM}. Recent work by Elizalde \cite{Eli} improved these bounds again using the semigroup tree method. Elizalde's new bounds are expressed in terms of generating functions. All known upper bounds are quite weak, as the best one, by Elizalde, has order of growth like $2^g / \sqrt{\pi g}$, which is far from the asymptotic behavior implied by Conjecture \ref{conj:B}, from which one should expect that $\lim_{g \to \infty} n_g^{1/g} = \vphi$.

In this paper, we propose a method to tackle the following stronger version of Conjecture \ref{conj:B}.

\begin{conjecture} \label{conj:ratio}
As $g \to \infty$, $n_g \vphi^{-g}$ converges to a finite limit.
\end{conjecture}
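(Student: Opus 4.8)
\textbf{Proof proposal for Conjecture~\ref{conj:ratio}.} The plan is to filter numerical semigroups by a ``depth'' parameter for which the count of genus-$g$ semigroups obeys a linear recursion in $g$ with dominant root exactly $\vphi$, isolate a main family contributing $\Theta(\vphi^g)$, and then show that the semigroups omitted by any fixed truncation of the filter are negligible. For a numerical semigroup $\Lambda$ with multiplicity $m$ and Frobenius number $f$, set $q(\Lambda) = \ceil{(f+1)/m}$, its \emph{depth}, and let $S_g^{\le k}$ be the set of numerical semigroups of genus $g$ with $q(\Lambda) \le k$.

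The case $k=2$ is the model, and it is provable directly: $q(\Lambda)\le 2$ exactly when $f \le 2m-1$, and in that range closure is automatic (any sum of two elements $\ge m$ lies in $[2m,\infty)\subseteq\Lambda$), so $\Lambda \mapsto \bigl(m,\ \Lambda \cap [m+1,2m-1]\bigr)$ is a bijection from $S_g^{\le 2}$ onto the pairs $(m,U)$ with $U \subseteq [m+1,2m-1]$ and $2(m-1)-\abs U = g$. Summing $\binom{m-1}{2m-2-g}$ over the admissible $m$ and reindexing by $i = g+1-m$ gives the Fibonacci identity $\abs{S_g^{\le 2}} = \sum_{i\ge 0}\binom{g-i}{i} = F_{g+1}$, hence $\vphi^{-g}\abs{S_g^{\le 2}} \to \vphi/\sqrt5$. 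The first task is to extend this to every fixed $k$. A semigroup of depth $\le k$ is determined by $m$ together with the indicator pattern of $\Lambda$ on $[m,km)$, and the closure conditions only couple positions within distance $O(m)$, so inserting one more gap far above the minimal generators acts by a fixed linear ``transfer'' independent of $g$. Packaging this as a transfer matrix should show that $\abs{S_g^{\le k}}$ satisfies a constant-coefficient recurrence whose characteristic polynomial has $\vphi$ as its unique dominant root — the $k=2$ instance being $x^2 = x+1$ — so that $\vphi^{-g}\abs{S_g^{\le k}}$ converges to a finite limit $c_k$, with $c_2 \le c_3 \le \cdots$ because $S_g^{\le k}\subseteq S_g^{\le k+1}$. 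Already for suitable fixed $k$ this should give $c_k > 2/\sqrt5$, recovering and improving the lower bound $2F_g$.

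To descend from the strata to $n_g$ one needs two further inputs. First, \emph{boundedness of the constants}: $c_k \uparrow c_\infty < \infty$. Heuristically a depth-$k$ semigroup of genus $g$ has multiplicity $m = O(g/k)$ and hence $\Theta(g)$ forced elements — the multiples of $m$ below $f$ — which should more than offset the extra positions that become available as $k$ grows; the point is to convert this into a bound $c_k \le C$ uniform in $k$. Second, \emph{uniform tail control}: $\sup_g\ \vphi^{-g}\,\#\{\Lambda : g(\Lambda)=g,\ q(\Lambda)>k\} \to 0$ as $k\to\infty$. Granting both, fix $\varepsilon>0$, choose $k$ so the tail is $<\varepsilon$ for every $g$, and estimate $\abs{\vphi^{-g}n_g - c_k} \le \abs{\vphi^{-g}\abs{S_g^{\le k}} - c_k} + \varepsilon$; letting $g\to\infty$ and then $k\to\infty$ yields $\vphi^{-g}n_g \to c_\infty$, which is Conjecture~\ref{conj:ratio}.

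The main obstacle is this pair of uniform statements, i.e.\ controlling semigroups of large depth (equivalently, small multiplicity). It is worth stressing how far current knowledge is from this: the best available upper bound on $n_g$ has growth like $2^g/\sqrt{\pi g}$, which does not even preclude $\abs{S_g^{\le k}}$ from growing faster than $\vphi^g$ for large $k$, so even finiteness of $c_k$ must be established by hand. The natural attack on the tail is an injective encoding of a genus-$g$, depth-$>k$ semigroup as a semigroup of genus $g' \le (1-\delta_k)g$ together with $O_k(1)$ bits of auxiliary data — for instance by contracting a forced arithmetic run of length $\Theta(g)$ — which would bound the tail by $O_k(\vphi^{(1-\delta_k)g}) = o(\vphi^g)$. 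I expect the real difficulties to be that the gain $\delta_k$ extracted this way is too small to be useful uniformly in $k$, and that such a contraction is far from injective, since many deep semigroups collapse to the same shorter one. This is precisely where the semigroup-tree analyses also stall, and crossing it seems to require a genuinely new structural handle on numerical semigroups of small multiplicity and given genus.
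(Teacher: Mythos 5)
This statement is a conjecture in the paper, not a theorem: the paper itself does not prove it, and proves only partial results in its direction (the exact count $F_{g+1}$ for $f<2m$, and the existence of $\lim t_g\vphi^{-g}$ for the $f<3m$ stratum as a possibly infinite sum over ``types''), leaving the remaining steps as Conjectures \ref{conj:sum-converge} and \ref{conj:f>3m}. Your proposal follows essentially the same stratification the paper uses --- filtering by $\ceil{(f+1)/m}$, which for $k=2,3$ is exactly the paper's $f<2m$ and $f<3m$ cases --- and your $k=2$ computation is correct and identical to Proposition \ref{prop:f<2m}. But the proposal is a program, not a proof, and the two steps you flag as ``further inputs'' are precisely the open parts.

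More specifically, the transfer-matrix step already breaks at $k=3$. The state needed to propagate the closure constraints is the pattern $A = \Lambda\cap[m,m+k']-m$ with $k'=f-2m$, and $k'$ can be as large as $m-1\approx g/2$, so the state space grows with $g$ rather than being finite; there is no constant-coefficient recurrence for $\abs{S_g^{\le 3}}$ with dominant root $\vphi$. What the paper gets instead is $\lim_g t_g\vphi^{-g} = \frac{\vphi}{\sqrt5}+\frac{1}{\sqrt5}\sum_{k\ge1}\sum_{A\in\c A_k}\vphi^{-\abs{(A+A)\cap[0,k]}+\abs{A}-k-1}$, an infinite sum over the $3^{\floor{(k-1)/2}}$ sets in $\c A_k$, whose convergence (your ``boundedness of the constants,'' here just finiteness of $c_3$) is exactly Conjecture \ref{conj:sum-converge} and is unproven; the exponential growth of $\abs{\c A_k}$ must be beaten by the exponent $-\abs{(A+A)\cap[0,k]}+\abs{A}-k$, and no uniform bound of this kind is established. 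Likewise the uniform tail control $\vphi^{-g}\#\{\Lambda: q(\Lambda)>k\}\to0$ is Conjecture \ref{conj:f>3m} (for $k=3$), for which neither you nor the paper has an argument --- your proposed contraction/injection is a heuristic, and as you note its injectivity and the size of the gain $\delta_k$ are exactly where it stalls. So the conclusion $\vphi^{-g}n_g\to c_\infty$ does not follow from what is established; the proposal correctly reproduces the paper's framework but does not close either of the gaps that make the statement a conjecture.
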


See Table \ref{tab:t_g} and Figure \ref{fig:ratio-plot} at the end of this paper for some computed values and plots of $n_g \vphi^{-g}$. Our results imply that
\[
	\liminf_{g \to \infty} n_g \vphi^{-g} > 3.78.
\]
For comparison, Bras-Amor\'os' \cite{B09} lower bound $n_g \geq 2F_g$ gives $\liminf_{g \to \infty} n_g \vphi^{-g} \geq \frac{2}{\sqrt 5} > 0.894$, and Elizalde's \cite{Eli} lower bound\footnote{Elizalde's lower bound is given as $n_g \geq a_g$, where
\[
	\sum_{g \geq 1} a_g t^g = \frac{t(1-t^2-2t^3-3t^4+t^5+2t^6+3t^7+3t^8+t^9)}{(1+t)(1-t)(1-t-t^2)(1-t-t^3)(1-t^3-2t^4-2t^5-t^6)}.
\]
Expanding using partial fractions gives us $a_n = \(1 + \frac{2}{\sqrt 5} \) \vphi^n + o(\vphi^n)$.
} gives $\liminf_{g \to \infty} n_g \vphi^{-g} \geq 1 + \frac{2}{\sqrt 5} > 1.894$.
It seems that $n_g \vphi^{-g}$ increases with $g$, although we do not know of a proof. Using computed values of $n_g$ from \cite{B08}, we have $n_{50} = 101090300128$, so that $n_{50}\vphi^{-50} \approx 3.59$. Thus, our numerical bound is a significant improvement over previous results, and we expect the true value $\lim_{g \to \infty} n_g \vphi^{-g}$ to be very close to our lower bound.

Let us mention as an aside that a method for computing $n_g$ was given by Blanco and Puerto \cite{BP}, who converted the problem of counting the number of numerical semigroups of a given genus and multiplicity to a problem of counting lattice points in a polytope. This gives a polynomial-time algorithm for computing $n_g$. However, their paper did not not give bounds for $n_g$.

The intuition behind our method is that we can enumerate numerical semigroups $\Lambda$ by the ``prefix'' of $\Lambda \setminus \{0\} - m(\Lambda)$. This idea was inspired by recent work on counting subsets of $\{0, 1, \dots, n\}$ with a prescribed number of missing sums and missing differences \cite{Zhao:limit}.

We begin with a warm-up in Section \ref{sec:f<2m} where we consider numerical semigroups satisfying $f < 2m$. Section \ref{sec:f<3m} contains the main part of our analysis, in which we count numerical semigroups satisfying $f < 3m$. Finally, Section \ref{sec:f>3m} contains some observations on the number of numerical semigroups left out by our analysis.


\section{Numerical semigroups with $f < 2m$} \label{sec:f<2m}

For $a \leq b$, let $[a,b]$ denote the set $\{a, a+1, \dotsc, b\}$, and let $[a, \infty)$ denote the set $\{a, a+1, \dots\}$. 
The following result shows how to construct all the numerical semigroups satisfying $f(\Lambda) < 2m(\Lambda)$.

\begin{proposition}
Let $m$ be a positive integer. The collection of numerical semigroups $\Lambda$ with multiplicity $m$ and satisfying $f(\Lambda) < 2m$ is exactly the collection of sets of the form
\begin{equation} \label{eq:f<2m-form}
	\Lambda = \{0, m\} \cup S \cup [2m, \infty)
\end{equation}
where $S \subset [m+1, 2m-1]$.
\end{proposition}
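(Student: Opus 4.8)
The plan is to prove the two inclusions separately, and both directions are essentially bookkeeping. First I would start with a numerical semigroup $\Lambda$ of multiplicity $m$ satisfying $f(\Lambda) < 2m$ and simply read off its structure. Since $m$ is the smallest nonzero element, $\Lambda \cap [1, m-1] = \emptyset$; since $f(\Lambda) < 2m$, every integer $\geq 2m$ exceeds the Frobenius number and therefore lies in $\Lambda$, so $[2m, \infty) \subseteq \Lambda$. Setting $S := \Lambda \cap [m+1, 2m-1]$, we get $\Lambda = \{0\} \cup \{m\} \cup S \cup [2m, \infty)$, which is exactly the form \eqref{eq:f<2m-form}. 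No use of closure is needed in this direction.

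For the converse, I would take a set $\Lambda = \{0, m\} \cup S \cup [2m, \infty)$ with $S \subseteq [m+1, 2m-1]$ and verify the numerical-semigroup axioms together with the multiplicity and Frobenius conditions. It contains $0$ and its complement in $\NN_0$ is $[1, m-1] \cup \big([m+1, 2m-1] \setminus S\big)$, which is finite and contained in $[1, 2m-1]$; in particular $f(\Lambda) \leq 2m-1 < 2m$. The smallest nonzero element is $m$, since every other element lies in $[m+1, \infty)$, so the multiplicity is $m$. The one step with any content is closure under addition: if $a, b \in \Lambda$ with $a = 0$ or $b = 0$ this is trivial, and otherwise $a, b \geq m$, so $a + b \geq 2m$ and hence $a + b \in [2m, \infty) \subseteq \Lambda$.

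I do not expect a genuine obstacle here: the whole argument rests on the single observation that any two nonzero elements of such a $\Lambda$ sum to at least $2m$, landing in the ``tail'' $[2m,\infty)$ which is forced to be inside $\Lambda$, so closure comes for free. The real purpose of the proposition is the clean parametrization it yields — numerical semigroups of multiplicity $m$ with $f < 2m$ correspond bijectively to subsets $S$ of the $(m-1)$-element set $[m+1, 2m-1]$ — which is what makes the subsequent count in this section immediate.
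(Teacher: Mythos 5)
Your proof is correct and follows essentially the same route as the paper: read off the forced structure in one direction, and in the other direction verify closure via the observation that any two nonzero elements sum to at least $2m$ and hence land in the tail $[2m,\infty)$. You check the multiplicity and Frobenius conditions a bit more explicitly than the paper does, but the argument is the same.
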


\begin{proof}
Let $\Lambda$ be a numerical semigroup with multiplicity $m$ and satisfying $f(\Lambda) < 2m$. Then $[0, m] \cap \Lambda = \{0, m\}$. Since $f(\Lambda) < 2m$, we have $[2m, \infty) \subset \Lambda$. Therefore, $\Lambda$ must have the form \eqref{eq:f<2m-form}. The $\Lambda$ in \eqref{eq:f<2m-form} is indeed a numerical semigroup, since if $a$ and $b$ are nonzero elements of $\Lambda$, then $a, b \geq m$ so that $a + b \geq 2m$ and hence $a + b \in \Lambda$.
\end{proof}

Now we restrict the genus of $\Lambda$. This is equivalent to restricting the size of $S$ in \eqref{eq:f<2m-form}. To simplify notation, in this paper we treat $\binom{a}{b}$ as zero unless $0 \leq b \leq a$.

\begin{corollary} \label{cor:f<2m-num}
Let $m$ and $g$ be positive integers. The numerical semigroups $\Lambda$ with multiplicity $m$, genus $g$, and satisfying $f(\Lambda) < 2m$ are exactly those sets of the form \eqref{eq:f<2m-form} with $S \subset [m+1, 2m-1]$ and $\abs{S} = 2m - 2 - g$. The number of such numerical semigroups is exactly $\binom{m-1}{2m - 2 - g}$.
\end{corollary}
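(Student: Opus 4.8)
The plan is to combine the structural description from the preceding proposition with a straightforward counting argument. The proposition already tells us that every numerical semigroup $\Lambda$ with multiplicity $m$ and $f(\Lambda) < 2m$ has the form $\Lambda = \{0, m\} \cup S \cup [2m, \infty)$ with $S \subseteq [m+1, 2m-1]$, and conversely every such set is such a numerical semigroup. So the only thing left is to translate the constraint ``genus equals $g$'' into a constraint on $S$, and then count the number of admissible $S$.

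First I would compute the genus of $\Lambda = \{0, m\} \cup S \cup [2m, \infty)$ in terms of $\abs{S}$. The complement $\NN_0 \setminus \Lambda$ is contained in $[1, 2m-1]$, since $0, m \in \Lambda$ and $[2m,\infty) \subseteq \Lambda$. Within $[1, 2m-1]$, the elements of $\Lambda$ are exactly $\{m\} \cup S$, so the gaps are $[1, 2m-1] \setminus (\{m\} \cup S)$. The interval $[1, 2m-1]$ has $2m - 1$ elements, and $\{m\} \cup S$ has $1 + \abs{S}$ elements (disjoint union, since $S \subseteq [m+1, 2m-1]$ does not contain $m$). Hence $g = g(\Lambda) = (2m - 1) - (1 + \abs{S}) = 2m - 2 - \abs{S}$, which rearranges to $\abs{S} = 2m - 2 - g$.

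Next I would assemble the count. By the proposition together with the genus computation, the numerical semigroups in question are in bijection with subsets $S \subseteq [m+1, 2m-1]$ of size $2m - 2 - g$. The interval $[m+1, 2m-1]$ has $(2m-1) - (m+1) + 1 = m - 1$ elements, so the number of such subsets is $\binom{m-1}{2m-2-g}$. I would note that this formula is correct even in degenerate ranges: with the stated convention that $\binom{a}{b} = 0$ unless $0 \le b \le a$, the count automatically vanishes precisely when $2m - 2 - g < 0$ (i.e. $g$ too large for any such $S$ to exist) or $2m - 2 - g > m - 1$ (i.e. $g < m - 1$, which is impossible since a numerical semigroup of multiplicity $m$ has at least the $m - 1$ gaps $1, \dots, m-1$), so no separate case analysis is needed.

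I do not anticipate a genuine obstacle here: the proposition does all the structural work, and the remainder is a one-line cardinality count of an interval followed by reading off a binomial coefficient. The only point requiring a modicum of care is making sure the disjointness $m \notin S$ is used correctly when counting $\abs{\{m\} \cup S\}$, and confirming that the edge cases are handled by the $\binom{a}{b}$ convention rather than needing to be excluded by hand.
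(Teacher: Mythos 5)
Your argument is correct and is precisely the (omitted) computation the paper intends: the proposition gives the structure, the genus count $g = 2m-2-\abs{S}$ pins down $\abs{S}$, and the binomial coefficient counts subsets of the $(m-1)$-element interval $[m+1,2m-1]$, with the paper's convention $\binom{a}{b}=0$ for $b\notin[0,a]$ handling the degenerate cases. Nothing further is needed.
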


By varying $m$, we obtain the main result of this section.

\begin{proposition} \label{prop:f<2m}
For any positive integer $g$, the number of numerical semigroups $\Lambda$ with genus $g$ satisfying $f(\Lambda) < 2 m(\Lambda)$ is $F_{g+1}$.
\end{proposition}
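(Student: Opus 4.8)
The plan is to reduce the statement to a sum of binomial coefficients via Corollary \ref{cor:f<2m-num}, and then recognize that sum as one of the classical ``shallow diagonal'' identities for the Fibonacci numbers. Concretely, for fixed $g$ the numerical semigroups in question are partitioned according to their multiplicity $m$, and Corollary \ref{cor:f<2m-num} tells us there are exactly $\binom{m-1}{2m-2-g}$ of them with multiplicity $m$ (with the convention that $\binom{a}{b}=0$ unless $0\le b\le a$, so only finitely many $m$ contribute). Hence the quantity to evaluate is
\[
	\sum_{m \geq 1} \binom{m-1}{2m-2-g}.
\]

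Next I would re-index. Writing the lower entry in the complementary form, $\binom{m-1}{2m-2-g} = \binom{m-1}{(m-1)-(2m-2-g)} = \binom{m-1}{g-m+1}$, and substituting $i = g-m+1$ (so $m-1 = g-i$), the sum becomes
\[
	\sum_{i} \binom{g-i}{i},
\]
where $i$ ranges over all integers; the binomial convention forces $0 \le i \le g/2$, so this is $\sum_{i=0}^{\lfloor g/2\rfloor}\binom{g-i}{i}$. It then remains to invoke the identity $\sum_{i\ge 0}\binom{g-i}{i} = F_{g+1}$. This is standard, but for completeness I would prove it by induction on $g$: the base cases $g=0,1$ are immediate, and for $g\ge 2$ Pascal's rule $\binom{g-i}{i} = \binom{g-1-i}{i} + \binom{g-1-i}{i-1}$ splits the sum into $\sum_i \binom{(g-1)-i}{i} = F_g$ and, after shifting the index in the second piece, $\sum_i \binom{(g-2)-i}{i} = F_{g-1}$, giving $F_g + F_{g-1} = F_{g+1}$ (the extended convention $F_n = 0$ for $n\le 0$ makes the boundary terms harmless).

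There is essentially no hard step here: once Corollary \ref{cor:f<2m-num} is in hand, the argument is a bookkeeping exercise plus a textbook identity. The only place to be slightly careful is the range of summation and the edge cases $m=1$ (which contributes $0$ for $g\ge 1$, consistent with $\NN_0$ having genus $0$) and small $g$, to make sure the binomial-coefficient convention and the extended Fibonacci convention $F_n=0$ for $n \le 0$ are applied consistently throughout.
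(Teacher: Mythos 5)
Your proposal is correct and follows the paper's own argument essentially verbatim: apply Corollary \ref{cor:f<2m-num}, sum over $m$, rewrite $\binom{m-1}{2m-2-g}$ as $\binom{m-1}{g-m+1}$, and invoke the diagonal identity $\sum_{k\ge 0}\binom{n-k}{k}=F_{n+1}$ (which the paper also cites as provable by induction). The only difference is that you spell out the induction, which the paper leaves as ``well-known.''
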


\begin{proof}
From Corollary \ref{cor:f<2m-num}, there are exactly $\binom{m-1}{2m - 2 - g}$ numerical semigroups with multiplicity $m$, genus $g$, and satisfying $f(\Lambda) < 2m$. By summing over all $m$, we find that the number of numerical semigroups with genus $g$ and satisfying $f(\Lambda) < 2m$ is
\begin{equation} \label{eq:constr1}
	\sum_{m} \binom{m-1}{2m - 2 - g} = \sum_{m} \binom{m-1}{g - m + 1} = F_{g+1},
\end{equation}
where the sum is taken over all finitely many $m$ for which the summand is nonzero. The last step comes from following well-known identity which can be proven easily by induction,
\begin{equation} \label{eq:fib}
	\sum_{k \geq 0} \binom{n-k}{k} = F_{n+1}. \qedhere
\end{equation}
\end{proof}


\section{Numerical semigroups with $f < 3m$} \label{sec:f<3m}

\subsection{Counting by type}
Now let us consider numerical semigroups $\Lambda$ with $2 m(\Lambda) < f(\Lambda) < 3 m (\Lambda)$. For any $A \subset \ZZ$ and $b \in \ZZ$, we write $A+A = \{a_1 + a_2 : a_1, a_2 \in A\}$ and $A + b = b + A = \{a + b : a \in A\}$. For a positive integer $k$, let
\[
	\c A_k = \{A \subset [0, k-1] :  0 \in A \text{ and }  k \notin A + A \}.
\]
For example,
\begin{align*} 
	\c A_1 &= \{ \{0\} \} \\
	\c A_2 &= \{ \{0\} \} \\
	\c A_3 &= \{ \{0\}, \{0, 1\}, \{0, 2\} \} \\ 
	\c A_4 &= \{ \{0\}, \{0, 1\}, \{0, 3\} \} \\
	\c A_5 &= \{ \{0\}, \{0, 1\}, \{0, 1, 2\}, \{0, 1, 3\}, \{0, 2\}, \{0, 2, 4\}, \{0, 3\}, \{0, 3, 4\}, \{0, 4\} \} \\
	\c A_6 &= \{ \{0\}, \{0, 1\}, \{0, 1, 2\}, \{0, 1, 4\}, \{0, 2\}, \{0, 2, 5\}, \{0, 4\}, \{0, 4, 5\}, \{0, 5\} \}
\end{align*}
For any set in $\c A_k$, at most one element is included from $\{x, k-x\}$ for each $1 \leq x < k/2$, so that $\abs{\c A_k} = 3^{\floor{(k-1)/2}}$.

\begin{definition}
Let $\Lambda$ be a numerical semigroup with multiplicity $m$ and Frobenius number $f$, such that $2m < f < 3m$.
We say that $\Lambda$ has \emph{type} $(A; k)$, where $k < m$ is a positive integer and $A \in \c A_k$, if $k < m$, $f = 2m + k$, and $\Lambda \cap [m, m+k] = A + m$.
\end{definition}

\begin{example} \label{ex:type}
Let us find all the numerical semigroups $\Lambda$ with $m = 5$ and of type $(\{0, 2\}, 3)$. Since $m = 5$, we have $\Lambda \cap [0, 5] = \{0, 5\}$. Since $\Lambda$ has type $(\{0, 2\}, 3)$, we know that $\Lambda \cap [0, 8] = \{0\} \cup (m + A) = \{0, 5, 7\}$ and $f(\Lambda) = 2m+k = 13$, so that $13\notin \Lambda$ and $[14, \infty) \subset \Lambda$. Since $5, 7 \in \Lambda$, we must have $10, 12 \in \Lambda$ as well. In fact, these are the only restrictions on $\Lambda$. That is, the numerical semigroups $\Lambda$ with $m = 5$ and of type $(\{0, 2\}, 3)$ all have the form
\[
	\Lambda = \{0, 5, 7, 10, 12\} \cup B \cup [14, \infty)
\]
where $B$ is any subset of $\{9, 11\}$. Hence there are four such numerical semigroups, namely
\begin{align*}
	\{0, 5, 7, 10, 12\} &\cup [14, \infty), \\
	\{0, 5, 7, 9, 10, 12\} &\cup [14, \infty), \\
	\{0, 5, 7, 10, 11, 12\} &\cup [14, \infty), \\
	\text{and } \{0, 5, 7, 9, 10, 11, 12\} &\cup [14, \infty).
\end{align*}
\end{example}

\medskip

Note that every numerical semigroup with $2m < f < 3m$ has a unique type $(A, k)$, since $k = f - 2m$ and $A = \Lambda \cap [m, m+k] - m$. The idea is to count numerical semigroups by their genus and type. The following result gives the construction of the family of numerical semigroups of a given type. See Figure \ref{fig:type-construction} for an illustration.

\begin{proposition} \label{prop:construction}
Let $m$ and $k$ be positive integers with $m > k$, and let $A \in \c A_k$. Then the collection of numerical semigroups with multiplicity $m$ and type $(A; k)$ is exactly the collection of sets of the form (written as a disjoint union)
\begin{equation} \label{eq:constr2}
	\Lambda = \{0\} \cup (m + A) \cup (2m + (A + A)\cap[0, k]) \cup B \cup [2m + k + 1, \infty)
\end{equation}
where $B$ is any subset of $[m+k+1, 2m+k-1] \setminus (2m + A + A)$.
\end{proposition}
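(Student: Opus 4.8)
The plan is to prove the two inclusions separately: first that every $\Lambda$ of the stated form is a numerical semigroup with multiplicity $m$ and type $(A;k)$, and then that every numerical semigroup with multiplicity $m$ and type $(A;k)$ can be written in that form. The second direction is mostly bookkeeping, so the real content is the first.

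For the \emph{forward direction}, let $\Lambda$ be given by \eqref{eq:constr2}. First I would record the easy structural facts: $0\in\Lambda$, the smallest nonzero element is $m$ (since $A\subseteq[0,k-1]$ with $0\in A$ but no positive integer below $m$ appears, as $m+A\subseteq[m,m+k-1]$ and everything else in $\Lambda$ is $\geq m+k+1 > m$), and $\NN_0\setminus\Lambda$ is finite because $[2m+k+1,\infty)\subseteq\Lambda$. So $\Lambda$ is a numerical semigroup with multiplicity $m$, provided it is closed under addition. The Frobenius number is $2m+k$: it is not in $\Lambda$ because $k\notin A+A$ (this is exactly the condition $A\in\c A_k$), so $2m+k\notin 2m+((A+A)\cap[0,k])$, it is not in $m+A$ (too big) nor in $B$ (which avoids $2m+k$ since $B\subseteq[m+k+1,2m+k-1]$), and everything larger is in $\Lambda$. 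And $\Lambda\cap[m,m+k]=m+A$ by construction, so the type is $(A;k)$ once closure is established. The key step is \textbf{closure under addition}, and this is where I expect the main obstacle to lie: one must check that for any two nonzero $a,b\in\Lambda$, $a+b\in\Lambda$. Split into cases by the ``block'' each of $a,b$ lies in. If both lie in $m+A$, then $a+b\in 2m+(A+A)$; the part of $A+A$ landing in $[0,k]$ is explicitly in $\Lambda$, and the part landing in $[k+1,\ldots]$ gives $a+b\geq 2m+k+1$, hence in $\Lambda$; we never land on $2m+k$ itself precisely because $k\notin A+A$. If at least one of $a,b$ is $\geq m+k+1$ (i.e.\ lies in $B$ or in the tail or in $2m+(A+A)\cap[0,k]$ region near the top), then since the other is $\geq m$, the sum is $\geq 2m+k+1$, landing in the tail. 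The one case needing a moment's care is $a\in m+A$, $b\in 2m+((A+A)\cap[0,k])$: then $a+b\geq 3m > 2m+k$ since $m>k$, so again we are in the tail. So the hypothesis $m>k$ is exactly what is needed to make the ``second-order'' sums fall past the Frobenius number.

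For the \emph{reverse direction}, let $\Lambda$ be any numerical semigroup with multiplicity $m$ and type $(A;k)$. By definition $\Lambda\cap[0,m]=\{0,m\}$, $\Lambda\cap[m,m+k]=m+A$, and $f(\Lambda)=2m+k$, so $[2m+k+1,\infty)\subseteq\Lambda$ and $2m+k\notin\Lambda$. It remains to pin down $\Lambda\cap[m+k+1,2m+k-1]$. Closure forces $2m+((A+A)\cap[0,k])\subseteq\Lambda$: indeed each such element is $(m+a_1)+(m+a_2)$ with $a_i\in A$, hence $m+a_i\in\Lambda$. Also $\Lambda\cap[m+1,m+k-1]$ is already determined (it is $m+(A\setminus\{0\})$ restricted appropriately, but all of $[m+1,m+k]$ is covered by the type condition). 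For the remaining indices in $[m+k+1,2m+k-1]$ that are \emph{not} of the form $2m+(a_1+a_2)$, I claim $\Lambda$ may contain them or not, freely: any such element, call it $x$, satisfies $m < x < 2m$, so it cannot be written as a sum of two nonzero elements of $\Lambda$ unless both summands are exactly... no, any sum of two nonzero elements of $\Lambda$ is $\geq 2m > x$ is false in general, but $x<2m$ means a representation $x=a+b$ with $a,b\geq m$ forces $a=b=\ldots$ impossible since $a+b\geq 2m>x$; hence $x$ is never a forced sum, and adding $x$ to $\Lambda$ creates new sums only of size $\geq x+m\geq 2m+k+1$, which are already in $\Lambda$. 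Therefore the freedom in choosing $\Lambda$ is exactly a free choice of $B\subseteq[m+k+1,2m+k-1]\setminus(2m+A+A)$, which is the claimed parametrization. Assembling these observations gives that $\Lambda$ has the form \eqref{eq:constr2}, completing the proof.
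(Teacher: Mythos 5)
Your proof is correct in substance and follows essentially the same route as the paper: the forward direction (every set of the form \eqref{eq:constr2} is a semigroup of type $(A;k)$) rests on the same case split for closure --- both summands in $m+A$, giving $2m+A+A$, versus one summand $\geq m+k+1$, pushing the sum past the Frobenius number --- and the reverse direction extracts the forced parts from the type and closure conditions exactly as the paper does. One local error worth fixing: in the reverse direction you assert that every ``free'' index $x\in[m+k+1,2m+k-1]\setminus(2m+A+A)$ satisfies $m<x<2m$, and deduce that $x$ cannot be a sum of two nonzero elements because such a sum is $\geq 2m$. This is false: the free indices include elements of $[2m,2m+k-1]$ whose offset from $2m$ is not in $A+A$ (e.g.\ with $m=5$, $k=3$, $A=\{0,2\}$, the index $11=2m+1$ is free). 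The correct argument is that if $x\leq 2m+k-1$ and $x=a+b$ with nonzero $a,b\in\Lambda$, then $a,b\leq x-m\leq m+k-1$, forcing $a,b\in m+A$ and hence $x\in 2m+A+A$; so no free index is a forced sum. In any case this whole ``freedom'' claim is logically redundant in the reverse direction --- once you know $\Lambda$ contains the forced blocks and set $B:=\Lambda\cap\bigl([m+k+1,2m+k-1]\setminus(2m+A+A)\bigr)$, the decomposition \eqref{eq:constr2} follows, and the fact that every choice of $B$ actually occurs is exactly your (correct) forward direction.
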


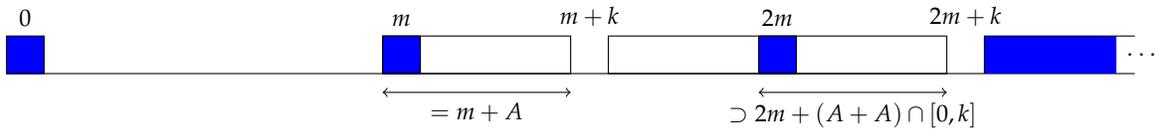
\begin{figure}[ht!]\centering

\begin{tikzpicture}[scale=.5, font={\footnotesize}]
	
	\draw (0,0) -- ++(30,0);
	\draw[fill=blue] (0,0) rectangle ++(1,1);
	\draw[fill=blue] (10,0) rectangle ++(1,1);
	\draw[fill=blue] (20,0) rectangle ++(1,1);
	\fill[blue] (26,0) rectangle (29.5,1);
	\draw (10,0) rectangle ++(5,1);
	\draw (16,0) rectangle (25,1);
	\draw (26,0) -- ++(0,1) -- (30,1);
	\node[above] at (0.5,1) {$0$};
	\node[above] at (10.5,1) {$m$};
	\node[above] at (15.5,1) {$m+k$};
	\node[above] at (20.5,1) {$2m$};
	\node[above] at (25.5,1) {$2m+k$};
	
	\draw[<->] (10,-.5) to node[below] {$= m+A$} (15, -.5);
	\draw[<->] (20,-.5) to node[below] {$\supset 2m + (A+A)\cap[0,k]$} (25, -.5);
	\node[right] at (29.5, .5) {$\cdots$};

\end{tikzpicture}
\caption{Illustrating a numerical semigroup $\Lambda$ with multiplicity $m$ and type $(A; k)$. 
Here all the elements of $\Lambda$ lie in boxed regions. Shaded regions represent elements that must be in $\Lambda$.\label{fig:type-construction}}
\end{figure}

\begin{proof}
This lemma is a straightforward generalization of the procedure described in Example \ref{ex:type}. Indeed, if $\Lambda$ has multiplicity $m$ and type $(A; k)$, then we must have $\Lambda \cap [0, m+k] = \{0\} \cup (m + A)$, $2m+k \notin \Lambda$, and $[2m+k+1, \infty) \subset \Lambda$. Furthermore, because $\Lambda$ is closed under addition, and $m + A \subset \Lambda$, it follows that $2m + A + A \subset \Lambda$. There are no other restrictions on $\Lambda \cap [m+k+1, 2m+k-1]$, so every $\Lambda$ has the form \eqref{eq:constr2}. 

Every $\Lambda$ in \eqref{eq:constr2} is a numerical semigroup. Indeed, for any nonzero $a, b \in \Lambda$, we have $a, b \geq m$; now, either $a, b \leq m + k$, so that $a, b \in m + A$ and hence $a + b \in 2m + A + A \subset \Lambda$, or one of $a$ and $b$ is greater than $m + k$, in which case $a + b > 2m + k = f(\Lambda)$, and hence $a + b \in \Lambda$.
\end{proof}

As a corollary, we obtain the number of numerical semigroups of a given type and genus.

\begin{corollary} \label{cor:type-m-count}
Let $m, k,$ and $g$ be positive integers with $m > k$, and let $A \in \c A_k$. Then the collection of numerical semigroups with multiplicity $m$, type $(A; k)$, and genus $g$ is exactly the collection of sets of the form \eqref{eq:constr2}, where 
where $B$ is any subset of $[m+k+1, 2m+k-1] \setminus (2m + A + A)$ with $2m + k - \abs{A} - \abs{(A + A)\cap[0, k]} - g$ elements. Furthermore, the number of such numerical semigroups is equal to
\begin{equation} \label{eq:num-genus-type}
	\binom{m - 1 - \abs{(A + A)\cap[0, k]}}{g + \abs{A}-m-k - 1}.
\end{equation}
\end{corollary}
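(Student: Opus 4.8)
The plan is to derive Corollary~\ref{cor:type-m-count} directly from Proposition~\ref{prop:construction} by counting, within the parametrized family \eqref{eq:constr2}, exactly those sets that additionally have genus $g$. First I would observe that in \eqref{eq:constr2} every element of $\Lambda$ above $m$ and below $2m+k$ is accounted for: the ``forced'' part $\{0\} \cup (m+A) \cup (2m + (A+A)\cap[0,k])$ contributes $1 + \abs{A} + \abs{(A+A)\cap[0,k]}$ elements in $[0, 2m+k]$, and then $B$ adds some chosen elements of $[m+k+1, 2m+k-1] \setminus (2m+A+A)$, while everything from $2m+k+1$ on is present and $2m+k$ itself is absent. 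Hence the number of gaps $\NN_0 \setminus \Lambda$, all of which lie in $[1, 2m+k]$, equals $(2m+k) - \big(\abs{A} + \abs{(A+A)\cap[0,k]} + \abs{B}\big)$; here I am using that the count of nonnegative integers in $[0,2m+k]$ is $2m+k+1$ and subtracting the $1 + \abs{A} + \abs{(A+A)\cap[0,k]} + \abs{B}$ that lie in $\Lambda$, then subtracting a further $1$ for the element $0$ which is not a gap. Setting this equal to $g$ and solving gives $\abs{B} = 2m + k - \abs{A} - \abs{(A+A)\cap[0,k]} - g$, which is the stated cardinality condition.

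Next I would count the choices of $B$. By Proposition~\ref{prop:construction}, $B$ ranges over all subsets of $[m+k+1, 2m+k-1] \setminus (2m + A+A)$ of the prescribed size, and these choices biject with the numerical semigroups in question. The ground set $[m+k+1, 2m+k-1]$ has $(2m+k-1)-(m+k+1)+1 = m-1$ elements. From it we remove the elements of $2m + A + A$ that land in this interval. Since $A \subseteq [0,k-1]$, we have $A + A \subseteq [0, 2k-2]$, so $2m + A + A \subseteq [2m, 2m+2k-2]$; intersecting with $[m+k+1, 2m+k-1]$ and using $k < m$, the relevant part of $2m+A+A$ is exactly $2m + \big((A+A) \cap [0, k-1]\big)$, of which there are $\abs{(A+A)\cap[0,k-1]}$ elements — and since $k \notin A+A$ by definition of $\c A_k$, this equals $\abs{(A+A)\cap[0,k]}$. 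Thus the ground set for $B$ has exactly $m - 1 - \abs{(A+A)\cap[0,k]}$ elements, and the number of valid $B$ is
\[
	\binom{m - 1 - \abs{(A+A)\cap[0,k]}}{2m + k - \abs{A} - \abs{(A+A)\cap[0,k]} - g}.
\]
Finally I would simplify the bottom entry using the convention $\binom{a}{b} = \binom{a}{a-b}$: subtracting the top entry $m-1-\abs{(A+A)\cap[0,k]}$ from $a$ leaves $2m+k-\abs{A}-\abs{(A+A)\cap[0,k]}-g - (m-1-\abs{(A+A)\cap[0,k]}) = m + k - \abs{A} - g + 1$; rewriting, this is $-(g + \abs{A} - m - k - 1)$, so the binomial equals $\binom{m-1-\abs{(A+A)\cap[0,k]}}{g + \abs{A} - m - k - 1}$, matching \eqref{eq:num-genus-type}.

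The only genuine subtlety — and the step I would be most careful about — is the bookkeeping in the middle paragraph: verifying that $2m + A + A$ meets the interval $[m+k+1, 2m+k-1]$ in precisely $2m + \big((A+A)\cap[0,k]\big)$, which relies on the hypothesis $m > k$ (so that no element of $2m+A+A$ falls below $m+k+1$, since the smallest is $2m \geq m+k+1$) together with $k \notin A+A$ (so that $(A+A)\cap[0,k] = (A+A)\cap[0,k-1]$, keeping us inside the upper endpoint $2m+k-1$). Everything else is a routine translation of the set-theoretic description in Proposition~\ref{prop:construction} into cardinalities, plus one application of the identity \eqref{eq:fib}-style symmetry of binomial coefficients under the paper's extended convention. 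I would also remark that one should double-check the edge cases where the binomial coefficient is zero are handled correctly by the convention, but this is automatic.
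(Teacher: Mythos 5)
Your argument is correct and is essentially the paper's own: the paper treats everything except the final binomial symmetry as immediate from Proposition \ref{prop:construction}, and your accounting of the ground set $[m+k+1,2m+k-1]\setminus(2m+A+A)$ (using $m>k$ and $k\notin A+A$) together with the genus equation is exactly the intended content. One small wording slip: in the gap count you say you subtract the $1+\abs{A}+\abs{(A+A)\cap[0,k]}+\abs{B}$ elements of $\Lambda$ in $[0,2m+k]$ \emph{and then} a further $1$ for the element $0$, which taken literally would double-count $0$; the formula you actually state, $g = 2m+k-\abs{A}-\abs{(A+A)\cap[0,k]}-\abs{B}$, is the correct single subtraction.
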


The last statement in the corollary follows from
\[
	\binom{m - 1 - \abs{(A + A)\cap[0, k]}}{2m + k - \abs{A} - \abs{(A + A)\cap[0, k]} - g}
	= \binom{m - 1 - \abs{(A + A)\cap[0, k]}}{g + \abs{A}-m-k - 1}.
\]

Now we can give the number of numerical semigroups of a given genus and type.

\begin{proposition} \label{prop:type-count}
Let $g$ and $k$ be positive integers, and let $A \in \c A_k$. Then the number of numerical semigroups with genus $g$ and type $(A; k)$ is at most $F_{g - \abs{(A + A)\cap[0, k]} + \abs{A} - k - 1}$, where equality holds if $3k \leq g + \abs{A} + \abs{(A + A)\cap[0, k]} -2$, which is true if $3k \leq g$.
\end{proposition}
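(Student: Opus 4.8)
The plan is to sum the count from Corollary~\ref{cor:type-m-count} over all admissible multiplicities $m$ and recognize the result as a Fibonacci number via the identity \eqref{eq:fib}. Write $a = \abs{A}$ and $c = \abs{(A+A)\cap[0,k]}$ for brevity; note that since $0 \in A$ we have $A \subset A+A$, hence $A \cap [0,k] \subset (A+A)\cap[0,k]$, so $c \geq a$, and also $c \leq k+1$. By Corollary~\ref{cor:type-m-count}, the number of numerical semigroups with multiplicity $m$, type $(A;k)$, and genus $g$ is $\binom{m-1-c}{g+a-m-k-1}$. Summing over $m$,
\[
	\sum_m \binom{m-1-c}{g+a-m-k-1}
	= \sum_m \binom{(m-1-c)}{(g+a-k-1) - (m-1-c) - c}.
\]
Reindex by $j = m-1-c$; as $m$ ranges over the integers, $j$ ranges over all integers, and the summand is $\binom{j}{(g+a-k-1-c) - j}$. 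By \eqref{eq:fib} with $n = g+a-k-1-c$, this sum equals $F_{n+1} = F_{g - c + a - k}$.

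Wait — the claimed bound is $F_{g - c + a - k - 1}$, one index lower, so I must account for the constraint $m > k$ (i.e.\ $k < m$) built into the definition of type, which the naive sum above ignores. The only term that the constraint $m > k$ could possibly exclude while the binomial coefficient $\binom{m-1-c}{g+a-m-k-1}$ is still nonzero is $m = k+1$; for $m \leq k$ there is nothing to exclude anyway, but one should check that no nonzero term is lost for $m \le k$ other than possibly reorganizing, and that exactly the $m=k+1$ term (which contributes $\binom{k-c}{g+a-2k-2}$) must be subtracted. Actually the cleaner route: restrict the sum to $m \geq k+1$ from the start, substitute $m' = m - k$ so $m' \geq 1$, and the sum becomes $\sum_{m' \geq 1} \binom{m'+k-1-c}{g+a-m'-2k-1}$; reindexing $j = m'+k-1-c \geq k-c$ gives $\sum_{j \geq k-c} \binom{j}{(g+a-2k-c) - j}$. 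The full sum $\sum_{j \geq 0}$ of this would be $F_{g+a-2k-c+1}$, which is \emph{not} what we want either, so the bookkeeping needs care: the correct identity to target is a truncated version of \eqref{eq:fib}, namely $\sum_{j \geq r}\binom{j}{n-j}$, and one must show the truncation at $j = k-c$ removes exactly the right tail. I expect that once the index arithmetic is done correctly, the truncated sum equals $F_{g-c+a-k-1}$ when the truncation is "active" and the untruncated Fibonacci number $F_{g-c+a-k}$ (or something between) when it is not — this is where the inequality (rather than equality) in the statement comes from.

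So the key steps are: (1) translate Corollary~\ref{cor:type-m-count} into the summand and carefully incorporate $m > k$; (2) reindex to bring the sum into the shape $\sum_{j}\binom{j}{n-j}$, tracking the lower limit of summation; (3) prove the truncated identity $\sum_{j \geq 0}\binom{j}{n-j} = F_{n+1}$ and bound the effect of truncating away small $j$, getting $\leq F_{g-c+a-k-1}$ in general; (4) identify the exact hypothesis under which no terms are lost (equivalently, under which every $j < k-c$ contributes zero), which should reduce to an inequality like $g + a - 2k - c \leq \text{(something)}$; show this simplifies to $3k \leq g + a + c - 2$, and that $3k \le g$ suffices because $c \geq a \geq 1$ forces $a + c - 2 \geq 0$. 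The main obstacle is step~(2)–(4): getting the index shifts exactly right so that the off-by-one between the naive Fibonacci count and the claimed bound is explained precisely by the $m > k$ truncation, and then cleanly characterizing when that truncation is vacuous. Everything else is routine.
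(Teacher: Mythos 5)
Your high-level plan is the same as the paper's: sum the count from Corollary~\ref{cor:type-m-count} over $m$, recognize the unrestricted sum as a Fibonacci number via \eqref{eq:fib}, and treat the constraint $m>k$ as the source of the inequality. But there is a concrete arithmetic slip in your first reindexing that derails the rest. Writing $a=\abs{A}$, $c=\abs{(A+A)\cap[0,k]}$ and $j=m-1-c$, the lower index of the binomial is
\[
g+a-m-k-1 \;=\; (g+a-k-c-2)-j,
\]
not $(g+a-k-c-1)-j$ as you have it (your expression $(g+a-k-1)-(m-1-c)-c$ simplifies to $g+a-k-m$, which is off by one from $g+a-m-k-1$). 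With the correct index, \eqref{eq:fib} applied with $n=g+a-k-c-2$ gives that the \emph{unrestricted} sum over all $m$ is exactly $F_{n+1}=F_{g-c+a-k-1}$ --- precisely the claimed bound. There is no off-by-one discrepancy to be explained by the truncation, no ``truncated Fibonacci identity'' is needed, and your second reindexing (which produces $F_{g+a-2k-c+1}$ for the full sum) contains a similar slip. The entire middle portion of your proposal is spent resolving a problem that does not exist, and you end without a completed argument.

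The correct completion is short: since every summand is a nonnegative binomial coefficient, restricting to $m>k$ can only decrease the sum, giving the upper bound $F_{g-c+a-k-1}$. For equality, note a term is nonzero only when $0\le g+a-m-k-1\le m-1-c$, i.e.\ $m\ge\tfrac12(g+a+c-k)$; the hypothesis $3k\le g+a+c-2$ is equivalent to $k+1\le\tfrac12(g+a+c-k)$, so every nonzero term already satisfies $m>k$ and nothing is discarded. Your step (4) sketches exactly this, and your observation that $3k\le g$ suffices because $a\ge 1$ and $c\ge 1$ (so $a+c-2\ge 0$) is correct. So the ideas are all present, but as written the proposal is not a proof: the index arithmetic must be fixed, after which most of your discussion of truncation should be deleted rather than repaired.
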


\begin{proof}
Apply Corollary \ref{cor:type-m-count} and sum over all $m > k$, we see that the number of numerical semigroups with genus $g$ and type $(A; k)$ is exactly
\begin{equation} \label{eq:type-exact-count}
	\sum_{m > k} \binom{m - 1 - \abs{(A + A)\cap[0, k]}}{g + \abs{A}-m-k - 1}.
\end{equation}
If we relax the constraint $m > k$ in the sum, then we obtain the following upper bound to \eqref{eq:type-exact-count}
\begin{equation} \label{eq:type-count-ineq}
	\sum_{m} \binom{m - 1 - \abs{(A + A)\cap[0, k]}}{g + \abs{A}-m-k - 1} = F_{g - \abs{(A + A)\cap[0, k]} + \abs{A} - k - 1},
\end{equation}
where we again used \eqref{eq:fib}. This proves the first part of the proposition. For the equality case, we need to show that whenever $3k \leq g + \abs{A} + \abs{(A + A)\cap[0, k]} - 2$, the only positive terms in the left-hand side sum in \eqref{eq:type-count-ineq} are those with $m > k$. Indeed, the term corresponding to $m$ in the sum is zero unless
\[
	0 \leq g + \abs{A}-m-k - 1 \leq m - 1 - \abs{(A + A)\cap[0, k]},
\]
or equivalently,
\begin{equation} \label{eq:m-range}
	\frac 1 2 \( g + \abs{A} + \abs{(A + A)\cap[0, k]} - k \) \leq m \leq g + \abs{A} - k - 1.
\end{equation}
Suppose that $3k \leq g + \abs{A} + \abs{(A + A)\cap[0, k]} -2$ holds, or equivalently
\begin{equation} \label{eq:equal-cond}
	k+1 \leq \frac 1 2 \( g + \abs{A} + \abs{(A + A)\cap[0, k]} - k \).
\end{equation}
If the term in \eqref{eq:type-count-ineq} corresponding to $m$ is positive, then \eqref{eq:m-range} and \eqref{eq:equal-cond} would imply that
\[
	k+1 \leq \frac 1 2 \( g + \abs{A} + \abs{(A + A)\cap[0, k]} - k \)
	\leq m,
\]
so that $m > k$. Hence, $3k \leq g + \abs{A} + \abs{(A + A)\cap[0, k]} -2$ guarantees that the quantities in \eqref{eq:type-exact-count} and \eqref{eq:type-count-ineq} are equal, thereby establishing a sufficient equality condition.
\end{proof}

\subsection{Asymptotics}
Let $t_g$ denote the number of numerical semigroups $\Lambda$ of genus $g$ satisfying $f(\Lambda) < 3m(\Lambda)$. Recall that $n_g$ is the number of numerical semigroups of genus $g$, so $t_g \leq n_g$. Previously we found the number of numerical semigroups of a given genus and type. Now we shall sum over all the types to obtain bounds for $t_g$.

\begin{lemma} \label{lem:t_g-lower}
For any positive integer $g$, we have
\[
	t_g \geq F_{g+1} + \sum_{k = 1}^{\floor{g/3}} \sum_{A \in \c A_k} F_{g - \abs{(A + A)\cap[0, k]} + \abs{A} - k - 1}.
\]
\end{lemma}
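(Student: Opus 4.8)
The plan is to decompose the count $t_g$ according to the trichotomy $f < 2m$, $f = 2m$, and $2m < f < 3m$, since these three cases partition the numerical semigroups with $f < 3m$ (note $f \neq 2m$ is actually automatic — if $2m$ were the Frobenius number then $m$ and $m$ sum to a gap, contradiction — so really only $f < 2m$ and $2m < f < 3m$ occur, giving a clean two-way split). The contribution from $f < 2m$ is exactly $F_{g+1}$ by Proposition~\ref{prop:f<2m}. The contribution from $2m < f < 3m$ is $\sum_k \sum_{A \in \c A_k}(\text{number of numerical semigroups of genus } g \text{ and type } (A;k))$, where the outer sum ranges over all positive integers $k$, since every numerical semigroup with $2m < f < 3m$ has a unique type $(A;k)$ with $k = f - 2m \geq 1$ and $A \in \c A_k$.

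First I would invoke Proposition~\ref{prop:type-count}: for each $k$ and each $A \in \c A_k$, the number of numerical semigroups of genus $g$ and type $(A;k)$ is \emph{at least} $F_{g - \abs{(A+A)\cap[0,k]} + \abs{A} - k - 1}$ precisely when the equality condition $3k \le g + \abs A + \abs{(A+A)\cap[0,k]} - 2$ holds — and in fact it equals that Fibonacci number then, but a lower bound is all we need. That condition is implied by $3k \le g$, i.e. $k \le \floor{g/3}$, which is why the inner double sum in the statement is truncated at $k = \floor{g/3}$. So restricting the sum over types to those with $k \le \floor{g/3}$ and discarding all contributions from $k > \floor{g/3}$ (these are nonnegative, so dropping them only decreases the total) yields
\[
	\sum_{2m < f < 3m} 1 \;\geq\; \sum_{k=1}^{\floor{g/3}} \sum_{A \in \c A_k} F_{g - \abs{(A+A)\cap[0,k]} + \abs{A} - k - 1}.
\]
Adding the $F_{g+1}$ from the $f < 2m$ case gives exactly the claimed inequality.

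The only genuinely delicate point is making sure the case split is exhaustive and disjoint, and that the truncation at $\floor{g/3}$ is justified. For disjointness and exhaustiveness: a numerical semigroup $\Lambda$ with $f(\Lambda) < 3m(\Lambda)$ satisfies either $f < 2m$ or $f > 2m$ (the value $f = 2m$ is impossible since $m + m \in \Lambda$), and in the latter case $1 \le f - 2m < m$ forces a well-defined type, so every such $\Lambda$ is counted exactly once. For the truncation: I need $k \le \floor{g/3}$ to imply $3k \le g$, hence $3k \le g + \abs A + \abs{(A+A)\cap[0,k]} - 2$ — here I use that $0 \in A$ forces $\abs A \ge 1$ and $0 = 0+0 \in (A+A)\cap[0,k]$ forces $\abs{(A+A)\cap[0,k]} \ge 1$, so the slack $\abs A + \abs{(A+A)\cap[0,k]} - 2 \ge 0$. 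This is routine but worth stating explicitly. I expect no real obstacle here; the substance was already done in Propositions~\ref{prop:f<2m} and~\ref{prop:type-count}, and this lemma is just their assembly.
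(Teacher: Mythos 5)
Your proof is correct and follows essentially the same route as the paper: split $t_g$ into the $f<2m$ count (which is $F_{g+1}$ by Proposition~\ref{prop:f<2m}) and the $2m<f<3m$ count, apply the equality case of Proposition~\ref{prop:type-count} for each type with $k\le\floor{g/3}$, and discard the remaining nonnegative terms. Your extra remarks --- that $f=2m$ is impossible because $2m=m+m\in\Lambda$, and that $|A|\ge 1$ and $|(A+A)\cap[0,k]|\ge 1$ justify the implication from $3k\le g$ to the equality condition --- are correct details that the paper leaves implicit or has already absorbed into the statement of Proposition~\ref{prop:type-count}.
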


\begin{proof}
From Proposition \ref{prop:f<2m} we know that the number of numerical semigroups of genus $g$ and satisfying $f < 2m$ is exactly $F_{g+1}$. Next we consider numerical semigroups of genus $g$ satisfying $2m < f < 3m$. Suppose that $1 \leq k \leq g/3$, and $A \in \c A_k$, then the equality condition of Proposition \ref{prop:type-count} is satisfied, so that the number of numerical semigroups of type $(A; k)$ is exactly $F_{g - \abs{(A + A)\cap[0, k]} + \abs{A} - k - 1}$. Now let us sum over all $(A; k)$ with $1 \leq k \leq g/3$, then we obtain that the number of numerical semigroups with $2m < f < 3m$ is at least
\[
	\sum_{k = 1}^{\floor{g/3}} \sum_{A \in \c A_k} F_{g - \abs{(A + A)\cap[0, k]} + \abs{A} - k - 1}.
\]
The lemma then follows immediately.
\end{proof}

Now we can deduce an asymptotic lower bound for $t_g$.

\begin{proposition} \label{prop:t_g-lower-asymp}
We have
\begin{equation} \label{eq:t_g-lower3}
	\liminf_{g \to \infty} t_g \vphi^{-g} 
	\geq 
	\frac{\vphi}{\sqrt 5} + 
	\frac{1}{\sqrt 5} \sum_{k = 1}^{\infty} \sum_{A \in \c A_k} \vphi^{ - \abs{(A + A)\cap[0, k]} + \abs{A} - k - 1}.
\end{equation}
\end{proposition}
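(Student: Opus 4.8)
The plan is to start from the lower bound of Lemma~\ref{lem:t_g-lower}, divide through by $\vphi^g$, and pass to the limit term by term; the only real work is justifying the interchange of $\liminf$ with an infinite sum whose upper limit of summation also depends on $g$.

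The one analytic input I would record first is the standard fact (Binet's formula, $F_n = \frac{1}{\sqrt 5}(\vphi^n - \psi^n)$ with $\psi = \frac{1-\sqrt 5}{2}$ and $\abs{\psi} < 1$): together with the convention $F_n = 0$ for $n \le 0$, this gives $F_n \ge 0$ for every integer $n$ and $\lim_{n\to\infty} F_n \vphi^{-n} = \frac{1}{\sqrt 5}$. Applied to the term $F_{g+1}$ in Lemma~\ref{lem:t_g-lower}, this yields $F_{g+1}\vphi^{-g} = \vphi\cdot F_{g+1}\vphi^{-(g+1)} \to \frac{\vphi}{\sqrt 5}$. For a fixed type $(A;k)$, set $d_{A,k} = \abs{(A+A)\cap[0,k]} - \abs{A} + k + 1$, a fixed integer (in fact $d_{A,k}\ge k+1$, since $0\in A$ forces $A\subseteq (A+A)\cap[0,k]$); then the same fact gives $F_{g - d_{A,k}}\vphi^{-g} = \vphi^{-d_{A,k}}\cdot F_{g-d_{A,k}}\vphi^{-(g-d_{A,k})} \to \frac{1}{\sqrt 5}\vphi^{-d_{A,k}} = \frac{1}{\sqrt 5}\vphi^{-\abs{(A+A)\cap[0,k]} + \abs{A} - k - 1}$.

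To deal with the $g$-dependent range of summation, I would rewrite the double sum in Lemma~\ref{lem:t_g-lower} as a sum over all $k \ge 1$ and all $A \in \c A_k$ of the quantity $\mathbf{1}[k \le \floor{g/3}]\cdot F_{g - \abs{(A+A)\cap[0,k]} + \abs{A} - k - 1}\,\vphi^{-g}$. Each summand is nonnegative because $F_n \ge 0$, and for each fixed $(A;k)$ the indicator equals $1$ for all sufficiently large $g$, so by the previous paragraph the summand converges to $\frac{1}{\sqrt 5}\vphi^{-\abs{(A+A)\cap[0,k]} + \abs{A} - k - 1}$. Fatou's lemma for the counting measure on the countable index set of pairs $(A;k)$ then gives
\[
\liminf_{g\to\infty}\ \sum_{k\ge1}\sum_{A\in\c A_k}\mathbf{1}[k\le\floor{g/3}]\,F_{g - \abs{(A+A)\cap[0,k]} + \abs{A} - k - 1}\,\vphi^{-g}\ \ge\ \frac{1}{\sqrt 5}\sum_{k\ge1}\sum_{A\in\c A_k}\vphi^{-\abs{(A+A)\cap[0,k]} + \abs{A} - k - 1},
\]
the right-hand side being taken in $[0,\infty]$, so that no finiteness of the series is needed for this step. (Equivalently, one may truncate the inner double sum at any fixed $K$, pass to the limit in $g$ for that finite sum, and then let $K\to\infty$, using monotonicity of the partial sums.) Combining this with $F_{g+1}\vphi^{-g}\to\frac{\vphi}{\sqrt 5}$ and the elementary inequality $\liminf_g(a_g + b_g)\ge \lim_g a_g + \liminf_g b_g$ valid when $(a_g)$ converges, applied to the bound of Lemma~\ref{lem:t_g-lower}, produces exactly \eqref{eq:t_g-lower3}.

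The main obstacle is precisely this exchange of $\liminf$ and the infinite sum; everything else is bookkeeping. I expect it to go through cleanly via Fatou once nonnegativity of $F_n$ and the growing summation range $k\le\floor{g/3}$ are handled as above, and in particular no control is needed on the sizes $\abs{(A+A)\cap[0,k]}$ or on $\abs{\c A_k}$ — the proposition holds as stated even if the series on the right-hand side were to diverge.
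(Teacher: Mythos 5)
Your proposal is correct and is essentially the paper's argument: the paper fixes a cutoff $k_M$, passes to the limit in the resulting finite sum for $g \ge 3k_M$, and then lets $k_M \to \infty$, which is exactly the truncation alternative you describe in your parenthetical remark (and is the elementary form of your Fatou step for nonnegative terms over a countable index set). All the supporting details — nonnegativity of $F_n$ under the paper's convention, the pointwise limits $F_{g+1}\vphi^{-g}\to\vphi/\sqrt5$ and $F_{g-d}\vphi^{-g}\to\vphi^{-d}/\sqrt5$, and the handling of the $g$-dependent summation range — check out.
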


\begin{proof}
First fix a positive integer $k_M$. Lemma \ref{lem:t_g-lower} implies that
\begin{equation} \label{eq:t_g-lower1}
	t_g \geq F_{g+1} + \sum_{k = 1}^{k_M} \sum_{A \in \c A_k} F_{g - \abs{(A + A)\cap[0, k]} + \abs{A} - k - 1}	
\end{equation}
for any $g \geq 3k_M$. Recall that $F_n = \frac{1}{\sqrt{5}} (\vphi^n - (-\vphi)^{-n})$. Multiplying both sides of \eqref{eq:t_g-lower1} by $\vphi^{-g}$ and then letting $g \to \infty$ (note that the right hand side remains a finite sum), we get
\begin{equation} \label{eq:t_g-lower2}
	\liminf_{g \to \infty} t_g \vphi^{-g} 
	\geq 
	\frac{\vphi}{\sqrt 5} + 
	\frac{1}{\sqrt 5} \sum_{k = 1}^{k_M} \sum_{A \in \c A_k} \vphi^{ - \abs{(A + A)\cap[0, k]} + \abs{A} - k - 1}.
\end{equation}
Since $k_M$ can be chosen to be arbitrarily large, we obtain \eqref{eq:t_g-lower3} by letting $k_M \to \infty$.
\end{proof}


See Table \ref{tab:sum} for some computed values for some partial sums for the right-hand side of \eqref{eq:t_g-lower3}. Based on numerical data and also some heuristic arguments, we strongly believe that this sum converges, though we currently do not have a proof.

\begin{conjecture} \label{conj:sum-converge}
The sum in the right-hand side of \eqref{eq:t_g-lower3} converges to a finite value.
\end{conjecture}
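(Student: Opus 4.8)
The plan is to prove the quantitatively stronger statement that $S_k:=\sum_{A\in\c A_k}\vphi^{-X(A)}=O(\psi^k)$ for some constant $\psi<\vphi$, where $X(A):=\abs{(A+A)\cap[0,k]}-\abs A$. Note $X(A)\ge 0$: since $0\in A$ we have $A\subseteq (A+A)\cap[0,k]$, and since $A\in\c A_k$ we have $k\notin A+A$, so $(A+A)\cap[0,k]\subseteq[0,k-1]$. The right-hand side of \eqref{eq:t_g-lower3} equals $\tfrac{\vphi}{\sqrt5}+\tfrac1{\sqrt5}\sum_{k\ge1}\vphi^{-k-1}S_k$, so any bound $S_k=O(\psi^k)$ with $\psi<\vphi$ makes the series converge (with a geometric tail). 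The difficulty is quantitatively tight: $\abs{\c A_k}=3^{\floor{(k-1)/2}}=\Theta(\sqrt3^{\,k})$ with $\sqrt3>\vphi$, so the trivial estimate $\vphi^{-X(A)}\le1$ only gives $S_k\le\abs{\c A_k}$, which overshoots $\vphi^k$ by a factor of roughly $(\sqrt3/\vphi)^k\approx1.07^k$. One must therefore show that $X(A)$ is of linear size in $k$ for all $A$ outside an exceptional family, and that this family has size $O(\psi^k)$ with $\psi<\vphi$ on its own.

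The first step is a convenient parametrization. Put $r=\floor{(k-1)/2}$. Every $A\in\c A_k$ is obtained by choosing, independently for each $i\in\{1,\dots,r\}$, one of ``$i\in A$'', ``$k-i\in A$'', or ``neither'': indeed ``$0\in A$, $A\subseteq[0,k-1]$, $k\notin A+A$'' is equivalent to ``$0\in A$; $k/2\notin A$ when $k$ is even; and for each $i$ at most one of $i,k-i$ lies in $A$'', which also reproves $\abs{\c A_k}=3^r$. Writing $L=A\cap[1,r]$ and $U'=\{i\in[1,r]:k-i\in A\}$ (disjoint subsets of $[1,r]$), we have $A=\{0\}\cup L\cup(k-U')$, and the elements of $A+A$ lying in $[0,k]$ are exactly the elements of $A$, the elements of $L+L\subseteq[2,2r]$, and the elements $k-d$ for $d$ in the restricted difference set $(U'-L)_{>0}:=\{u-\ell:u\in U',\ \ell\in L,\ u>\ell\}$ (no other sum is $\le k$, since $(k-U')+(k-U')\subseteq[k+1,2k-2]$). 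Hence
\[
  X(A)=\Bigl|\bigl((L+L)\cup\{k-d:d\in(U'-L)_{>0}\}\bigr)\setminus A\Bigr|,
\]
which yields usable lower bounds such as $X(A)\ge\abs{(L+L)\setminus A}$ and $X(A)\ge\abs{(U'-L)_{>0}\setminus U'}$.

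The natural line of attack is then a structure-versus-count dichotomy. Fix a small constant $\delta>0$ (the arithmetic forces $\delta>\log_\vphi(\sqrt3/\vphi)\approx0.14$) and split $S_k\le\abs{\{A\in\c A_k:X(A)<\delta k\}}+\abs{\c A_k}\vphi^{-\delta k}$; the second term is $O((\sqrt3\,\vphi^{-\delta})^k)=O(\psi^k)$ with $\psi<\vphi$, so it remains to bound the number of ``cheap'' $A$ (those with $X(A)<\delta k$). One would organize the cheap $A$ according to $\ell_0:=\min L$: the bound $X(A)\ge\abs{(U'-L)_{>0}\setminus U'}\ge\abs{(U'-\ell_0)_{>0}\setminus U'}$ forces $U'$ to be within $O(\delta k)$ of being invariant under subtraction of $\ell_0$, hence within $O(\delta k)$ of a union of arithmetic-progression blocks with common difference $\ell_0$ (at most about $(\lceil r/\ell_0\rceil+1)^{\ell_0}$ of them), while $X(A)\ge\abs{(L+L)\setminus A}$ forces $L+L$ to be absorbed by $A$ up to $O(\delta k)$ terms, which together with the disjointness of $U'$ from $L$ constrains $L$ to be additively structured (and caps $\abs L$) whenever $L$ is not concentrated near $r$. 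Counting the admissible pairs $(L,U')$ in each range of $\ell_0$ --- invoking that there are only a few such $U'$ per $L$, and that the number of numerical semigroups of Frobenius number $f$ is small --- would, if pushed through losslessly, give $\abs{\{A:X(A)<\delta k\}}=O(\psi^k)$ with $\psi<\vphi$.

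The step I expect to be the real obstacle is precisely that lossless quantitative control; the $\approx1.07^k$ margin leaves essentially no slack. There is a visible tension: to keep $\abs{\c A_k}\vphi^{-\delta k}$ subcritical one needs $\delta$ bounded below ($\delta\gtrsim0.14$), but then even a single layer of ``$O(\delta k)$ corrections'' in the structure theorem for $U'$ or $L$ costs a factor like $\binom{k}{O(\delta k)}$, which is exponential in $k$ for fixed $\delta$ and already exceeds the margin against the structured count (which itself is of the order $1.3^k$, witnessed e.g.\ by the $3^{r/2}$ sets with $L=\{r/2\}$ and $U'$ a downset modulo $r/2$, all of which have $X(A)\le1$). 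Resolving this seems to need either a layer-by-layer analysis with genuinely lossless Freiman-type stability input --- made delicate by the fact that the closure of a set can grow by far more than its first-generation sumset --- or a transfer-operator/generating-function computation that pins the growth constant exactly rather than bounding it; and in every approach the ``overlap zone'' $[\sim k/2,\,k-1]$, where $L+L$, the cross-sums $\{k-d\}$ and the high part $k-U'$ all interact, is where no clean estimate is currently available. Moreover the ``$X(A)=0$'' layer alone is, via $A\mapsto A\cup[k,\infty)$, a weighted count of numerical semigroups (its convergence already requires $N(f)=O(\psi_0^{\,f})$ with $\psi_0<\vphi$ for the number $N(f)$ of numerical semigroups of Frobenius number $f$, and the elementary pairing bound $N(f)=O(\sqrt3^{\,f})$ is not enough since $\sqrt3>\vphi$), so Conjecture \ref{conj:sum-converge} appears entangled with the asymptotics of $n_g$ themselves --- indeed it would follow at once from any upper bound $n_g=O(\vphi^g)$, which is open.
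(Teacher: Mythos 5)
The statement you were asked to prove is Conjecture \ref{conj:sum-converge}, which the paper explicitly leaves open: it offers no proof, only the numerical evidence of Table \ref{tab:sum} and the admission that no proof is currently known. Your submission is, by its own account, also not a proof: it is a strategy together with an accurate diagnosis of why that strategy does not close. The preliminary reductions you make are correct and worth recording --- that $X(A)=\abs{(A+A)\cap[0,k]}-\abs{A}\ge 0$; the parametrization $A=\{0\}\cup L\cup(k-U')$ with $L,U'$ disjoint subsets of $[1,r]$, recovering $\abs{\c A_k}=3^{\floor{(k-1)/2}}$ and the formula for $X(A)$ in terms of $(L+L)\setminus A$ and $(U'-L)_{>0}$; the identification of the $X(A)=0$ layer with numerical semigroups of Frobenius number $k$; and the observation that the conjecture would follow from the (open) upper bound $n_g=O(\vphi^g)$, since divergence of the sum would force $\liminf_{g} t_g\vphi^{-g}=\infty$ via Proposition \ref{prop:t_g-lower-asymp}. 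These correctly locate where the difficulty lives.

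But the actual gap is exactly where you flag it: the dichotomy $S_k\le\abs{\{A:X(A)<\delta k\}}+\abs{\c A_k}\,\vphi^{-\delta k}$ requires $\delta>\log_\vphi(\sqrt3/\vphi)\approx 0.14$, and at that scale the ``structure of cheap sets'' step is never carried out. The claimed control of $U'$ by near-invariance under subtraction of $\min L$, and of $L$ by near-absorption of $L+L$ into $A$, is asserted rather than proved, and --- as you yourself note --- each layer of $O(\delta k)$ corrections costs a factor of order $\binom{k}{O(\delta k)}$ that the $(\sqrt3/\vphi)^k\approx 1.07^k$ margin cannot absorb, while the genuinely structured sets already number about $1.3^k$. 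No lemma in the proposal actually yields $\abs{\{A:X(A)<\delta k\}}=O(\psi^k)$ with $\psi<\vphi$, so nothing is proved. Given that the statement is an open conjecture in the paper and, as you observe, is essentially entangled with the open problem of bounding $n_g$ above by $O(\vphi^g)$, this outcome is unsurprising; but the submission must be assessed as an informative incomplete attempt, not a proof.
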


\begin{table}[ht!] \centering
\caption{Partial sums \eqref{eq:t_g-lower2} of the right-hand side (RHS) of \eqref{eq:t_g-lower3}.\label{tab:sum}}
\small
\begin{tabular}{cr@{.}lcr@{.}lcr@{.}lcr@{.}lcr@{.}l}
\toprule
$k_M$ & \multicolumn{2}{c}{RHS of \eqref{eq:t_g-lower2}} & 
$k_M$ & \multicolumn{2}{c}{RHS of \eqref{eq:t_g-lower2}} &
$k_M$ & \multicolumn{2}{c}{RHS of \eqref{eq:t_g-lower2}} &
$k_M$ & \multicolumn{2}{c}{RHS of \eqref{eq:t_g-lower2}} &
$k_M$ & \multicolumn{2}{c}{RHS of \eqref{eq:t_g-lower2}} \\
\midrule
0 & 0&72361 & 10 & 2&07121 & 20 & 3&02285 & 30 & 3&51068 & 40 & 3&72361 \\
1 & 0&89443 & 11 & 2&21950 & 21 & 3&10323 & 31 & 3&54849 & 41 & 3&73890 \\
2 & 1&00000 & 12 & 2&30278 & 22 & 3&15132 & 32 & 3&56996 & 42 & 3&74738 \\
3 & 1&17082 & 13 & 2&43780 & 23 & 3&22313 & 33 & 3&60088 & 43 & 3&76001 \\
4 & 1&27639 & 14 & 2&51719 & 24 & 3&26281 & 34 & 3&61913 & 44 & 3&76715 \\
5 & 1&45085 & 15 & 2&63446 & 25 & 3&32421 & 35 & 3&64565 & 45 & 3&77725 \\
6 & 1&55279 & 16 & 2&70447 & 26 & 3&35986 & 36 & 3&66030 & 46 & 3&78318 \\
7 & 1&72222 & 17 & 2&81245 & 27 & 3&41108 & 37 & 3&68251  \\
8 & 1&82191 & 18 & 2&87343 & 28 & 3&44105 & 38 & 3&69523  \\
9 & 1&97675 & 19 & 2&96852 & 29 & 3&48580 & 39 & 3&71321  \\
\bottomrule
\end{tabular}
\end{table}

Next we give an upper bound for $t_g$.

\begin{lemma} \label{lem:t_g-upper}
For any positive integer $g$, we have
\[
	t_g \leq F_{g+1} + \sum_{k = 1}^{g-1} \sum_{A \in \c A_k} F_{g - \abs{(A + A)\cap[0, k]} + \abs{A} - k - 1}.
\]
\end{lemma}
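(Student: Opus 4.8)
The plan is to follow the proof of Lemma~\ref{lem:t_g-lower} almost verbatim, except that we invoke only the upper-bound half of Proposition~\ref{prop:type-count}, which needs no hypothesis relating $k$ and $g$. First I would partition the semigroups counted by $t_g$ according to the location of the Frobenius number. Since $m(\Lambda) \in \Lambda$ and $\Lambda$ is closed under addition, $2m(\Lambda) \in \Lambda$, so $f(\Lambda) \neq 2m(\Lambda)$; hence $f(\Lambda) < 3m(\Lambda)$ holds if and only if either $f(\Lambda) < 2m(\Lambda)$ or $2m(\Lambda) < f(\Lambda) < 3m(\Lambda)$, and these two cases are mutually exclusive. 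By Proposition~\ref{prop:f<2m}, the first case accounts for exactly $F_{g+1}$ semigroups of genus $g$.

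For the second case I would count by type. As observed just before Proposition~\ref{prop:construction}, every numerical semigroup with $2m < f < 3m$ has a unique type $(A;k)$ with $k = f - 2m \geq 1$ and $A \in \c A_k$. Writing $N_g(A;k)$ for the number of numerical semigroups of genus $g$ and type $(A;k)$, the number of genus-$g$ semigroups with $2m < f < 3m$ is therefore $\sum_{k \geq 1} \sum_{A \in \c A_k} N_g(A;k)$. By the first assertion of Proposition~\ref{prop:type-count}, $N_g(A;k) \leq F_{g - \abs{(A + A)\cap[0,k]} + \abs{A} - k - 1}$ for every such $k$ and $A$.

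The last point is to check that this bound truncates at $k = g-1$, so that the sum over $k$ is really finite. Because $0 \in A$ we have $A = A + 0 \subseteq A + A$, and since $A \subseteq [0, k-1] \subseteq [0,k]$ this gives $A \subseteq (A+A) \cap [0,k]$, hence $\abs{(A+A)\cap[0,k]} \geq \abs{A}$. Consequently
\[
	g - \abs{(A + A)\cap[0,k]} + \abs{A} - k - 1 \leq g - k - 1,
\]
which is $\leq 0$ whenever $k \geq g$; for such $k$ every term $F_{g - \abs{(A + A)\cap[0,k]} + \abs{A} - k - 1}$ vanishes by the convention $F_n = 0$ for $n \leq 0$. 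Since all Fibonacci numbers are nonnegative, combining the three observations yields
\[
	t_g = F_{g+1} + \sum_{k \geq 1} \sum_{A \in \c A_k} N_g(A;k) \leq F_{g+1} + \sum_{k = 1}^{g-1} \sum_{A \in \c A_k} F_{g - \abs{(A + A)\cap[0, k]} + \abs{A} - k - 1}.
\]

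I do not anticipate a genuine obstacle: all the real work --- constructing the semigroups of a given type and bounding their number --- is already carried out in Propositions~\ref{prop:construction} and~\ref{prop:type-count}, so this lemma is essentially bookkeeping. The only things to be careful about are the disjointness of the two cases (in particular that $f \neq 2m$), the fact that each inner sum $\sum_{A \in \c A_k}$ is finite since $\abs{\c A_k} = 3^{\floor{(k-1)/2}}$, and the elementary inequality $\abs{(A+A)\cap[0,k]} \geq \abs{A}$ used to cut the outer sum off at $g - 1$.
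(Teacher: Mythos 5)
Your proposal is correct and follows essentially the same route as the paper: split off the $f<2m$ case (counted exactly by Proposition \ref{prop:f<2m}), bound the $2m<f<3m$ case type by type via the inequality in Proposition \ref{prop:type-count}, and truncate the sum at $k=g-1$. Your explicit justification of the truncation via $\abs{(A+A)\cap[0,k]}\geq\abs{A}$ (from $0\in A$) is a detail the paper leaves implicit, and it is exactly right.
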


\begin{proof}
The argument is essentially the same as that of Lemma \ref{lem:t_g-lower}, except that we now use the upper bound result in Proposition \ref{prop:type-count}. The first term on the right-hand side is the number of numerical semigroups with $f < 2m$. The sum is, by Proposition \ref{prop:type-count}, an upper bound to the number of numerical semigroups with $2m < f < 3m$. Note that we only need to sum up to $k = g-1$, since for $f \geq g$, we have $g - \abs{(A + A)\cap[0, k]} + \abs{A} - k - 1 \leq 0$ so that the term $F_{g - \abs{(A + A)\cap[0, k]} + \abs{A} - k - 1}$ is zero.
\end{proof}

\begin{lemma} \label{lem:t_g-upper2}
We have
\begin{equation} \label{eq:t_g-upper2}
	\limsup_{g \to \infty} t_g \vphi^{-g} 
	\leq 
	\frac{\vphi}{\sqrt 5} + 
	\frac{1}{\sqrt 5} \sum_{k = 1}^{\infty} \sum_{A \in \c A_k} \vphi^{ - \abs{(A + A)\cap[0, k]} + \abs{A} - k - 1}.
\end{equation}
\end{lemma}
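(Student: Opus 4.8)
The plan is to mirror the lower-bound argument of Proposition~\ref{prop:t_g-lower-asymp}, but now starting from the upper bound in Lemma~\ref{lem:t_g-upper}. The new difficulty is that the sum in Lemma~\ref{lem:t_g-upper} has range $1 \le k \le g-1$, which depends on $g$, so after multiplying by $\vphi^{-g}$ we cannot simply pass to the limit term-by-term as in the lower-bound proof (where the sum was a fixed finite sum once $k_M$ was chosen). Instead, I would split the sum at a threshold $k_M$ and control the tail $k_M < k \le g-1$ uniformly in $g$.

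First I would write, for any fixed $k_M$ and all $g$ large,
\[
	t_g \vphi^{-g}
	\le
	F_{g+1}\vphi^{-g}
	+ \sum_{k=1}^{k_M} \sum_{A \in \c A_k} F_{g - \abs{(A+A)\cap[0,k]} + \abs{A} - k - 1}\,\vphi^{-g}
	+ R_g(k_M),
\]
where $R_g(k_M) = \sum_{k=k_M+1}^{g-1}\sum_{A\in\c A_k} F_{g-\abs{(A+A)\cap[0,k]}+\abs{A}-k-1}\,\vphi^{-g}$ is the tail. For the first two terms, letting $g\to\infty$ with $k_M$ fixed gives, exactly as in Proposition~\ref{prop:t_g-lower-asymp} using $F_n \le \frac{1}{\sqrt5}\vphi^n$ (valid for all $n$, with the convention $F_n=0$ for $n\le 0$), the contribution $\frac{\vphi}{\sqrt5} + \frac{1}{\sqrt5}\sum_{k=1}^{k_M}\sum_{A\in\c A_k}\vphi^{-\abs{(A+A)\cap[0,k]}+\abs{A}-k-1}$.

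The key step is bounding $\limsup_{g\to\infty} R_g(k_M)$ by something that tends to $0$ as $k_M\to\infty$. Using $F_n \le \frac{1}{\sqrt5}\vphi^n$ and $\abs{\c A_k} = 3^{\floor{(k-1)/2}} \le 3^{k/2}$, together with the crude bounds $\abs{A}\le k$ and $\abs{(A+A)\cap[0,k]}\ge 1$ (since $0\in A+A$), each term in $R_g(k_M)$ is at most $\frac{1}{\sqrt5}\vphi^{\abs{A}-k-2} \le \frac{1}{\sqrt5}\vphi^{-2}$, so $R_g(k_M)\le \frac{1}{\sqrt5}\sum_{k>k_M} 3^{k/2}\vphi^{-2}$ — which diverges. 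This crude estimate is too weak, so the real work is to get a better exponent: one needs that for $A\in\c A_k$, the quantity $\abs{A} - \abs{(A+A)\cap[0,k]}$ is bounded above by roughly $k/2 - c$ for a constant $c>0$, or more precisely that $\sum_{A\in\c A_k}\vphi^{-\abs{(A+A)\cap[0,k]}+\abs{A}-k-1}$ decays geometrically in $k$. The cleanest way is to observe that $\c A_k$ injects into choices of one element from each pair $\{x,k-x\}$, $1\le x<k/2$, and that every $A\in\c A_k$ has $\abs{A}\le 1+\floor{(k-1)/2}$, so $\abs{A}-k-1 \le -\lceil (k-1)/2\rceil - 1 \le -k/2$; combined with $\abs{(A+A)\cap[0,k]}\ge 1$, each term is at most $\vphi^{-k/2-2}$, and then $\sum_{A\in\c A_k}(\cdots) \le 3^{k/2}\vphi^{-k/2-2} = (\,3/\vphi\,)^{k/2}\vphi^{-2}$, which still diverges since $3>\vphi^2\approx 2.618$. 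Hence I would instead need to exploit the correlation between $\abs{A}$ being large and $\abs{(A+A)\cap[0,k]}$ being large: a set $A$ with many elements forces many sums to land in $[0,k]$. Quantifying this — showing $\abs{(A+A)\cap[0,k]} \ge c\abs{A}$ or similar for $A\in\c A_k$ — is the main obstacle, and it is precisely the content of Conjecture~\ref{conj:sum-converge} (convergence of the full series); so strictly, Lemma~\ref{lem:t_g-upper2} as stated should be read as asserting the stated inequality \emph{conditional on the right-hand side being finite}, i.e.\ conditional on Conjecture~\ref{conj:sum-converge}.

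Granting that the tail estimate goes through (conditionally), the proof concludes routinely: for every $\varepsilon>0$ choose $k_M$ so that $\limsup_g R_g(k_M) < \varepsilon$ and so that $\frac{1}{\sqrt5}\sum_{k>k_M}\sum_{A\in\c A_k}\vphi^{-\abs{(A+A)\cap[0,k]}+\abs{A}-k-1} < \varepsilon$; then
\[
	\limsup_{g\to\infty} t_g\vphi^{-g}
	\le
	\frac{\vphi}{\sqrt5} + \frac{1}{\sqrt5}\sum_{k=1}^{\infty}\sum_{A\in\c A_k}\vphi^{-\abs{(A+A)\cap[0,k]}+\abs{A}-k-1} + 2\varepsilon,
\]
and letting $\varepsilon\to 0$ gives \eqref{eq:t_g-upper2}. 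I expect the only genuinely hard point to be the uniform-in-$g$ tail bound $\limsup_g R_g(k_M)\to 0$, which is why the statement is naturally paired with Conjecture~\ref{conj:sum-converge}.
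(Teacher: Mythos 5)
Your decomposition---split the sum at a threshold $k_M$, take the exact limit of the head, and control the tail uniformly in $g$---is exactly the paper's argument, and your treatment of the case where the right-hand side converges is correct. However, you have misdiagnosed the logical status of the lemma. You conclude that the uniform tail bound requires Conjecture~\ref{conj:sum-converge}, and hence that the lemma ``should be read as conditional''; in fact the lemma is unconditionally true, because when the series on the right-hand side diverges the asserted inequality is $\limsup_g t_g\vphi^{-g}\le+\infty$ and is vacuous. This is precisely how the paper opens its proof: one may assume the sum converges at no cost, and under that assumption the tail $\sum_{k>k_M}\sum_{A\in\c A_k}\vphi^{-\abs{(A+A)\cap[0,k]}+\abs{A}-k-1}$ tends to $0$ as $k_M\to\infty$ automatically, being the tail of a convergent series. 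No structural estimate relating $\abs{(A+A)\cap[0,k]}$ to $\abs{A}$ is needed, so the ``main obstacle'' you identify does not arise for this lemma; it arises only if one wants the right-hand side to be a finite number, which is exactly Conjecture~\ref{conj:sum-converge} and is left open in the paper.

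One smaller correction: the inequality $F_n\le\frac{1}{\sqrt5}\vphi^n$ you invoke is false for odd $n$ (e.g.\ $F_1=1>\vphi/\sqrt5\approx 0.7236$); since $F_n=\frac{1}{\sqrt5}\(\vphi^n-(-\vphi)^{-n}\)$, the correct uniform bound is $F_n\le\frac{2}{\sqrt5}\vphi^n$, which is what the paper uses for the tail. This changes only a constant and does not affect the conclusion, since the tail is sent to $0$ regardless; and it is irrelevant for the head of the sum, where the limit $\lim_{g\to\infty}F_{g+c}\vphi^{-g}=\vphi^{c}/\sqrt5$ is computed exactly rather than bounded.
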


\begin{proof}
If Conjecture \ref{conj:sum-converge} were false and the right-hand side of \eqref{eq:t_g-upper2} diverges to infinity, and then the lemma is vacuously true. So assume that the sum converges. Fix a positive integer $k_M$. From Lemma \ref{lem:t_g-upper} we obtain that
\begin{align*}
	t_g	
&	\leq F_{g+1} + \sum_{k = 1}^{g - 1} \sum_{A \in \c A_k} F_{g - \abs{(A + A)\cap[0, k]} + \abs{A} - k - 1}
\\&	= F_{g+1} + \sum_{k = 1}^{k_M} \sum_{A \in \c A_k} F_{g - \abs{(A + A)\cap[0, k]} + \abs{A} - k - 1} 
		+ \sum_{k > k_M} \sum_{A \in \c A_k} F_{g - \abs{(A + A)\cap[0, k]} + \abs{A} - k - 1}.
\end{align*}
Now multiply both sides by $\vphi^{-g}$ and let $g \to \infty$. We have
\[
	\lim_{g \to \infty} \vphi^{-g} \sum_{k = 1}^{k_M} \sum_{A \in \c A_k} F_{g - \abs{(A + A)\cap[0, k]} + \abs{A} - k - 1} 
	= \frac{1}{\sqrt 5} \sum_{k = 1}^{k_M} \sum_{A \in \c A_k} \vphi^{ - \abs{(A + A)\cap[0, k]} + \abs{A} - k - 1}
\]
since the number of terms in the sum is always finite, and also
\[
	\sum_{k > k_M} \sum_{A \in \c A_k} F_{g - \abs{(A + A)\cap[0, k]} + \abs{A} - k - 1}
	\leq \frac{2}{\sqrt{5}} \sum_{k > k_M} \sum_{A \in \c A_k} \vphi^{g - \abs{(A + A)\cap[0, k]} + \abs{A} - k - 1}
\]
since $F_n \leq \frac{2}{\sqrt{5}}\vphi^n$. Combining the two statements gives us
\begin{multline} \label{eq:t_g-upper3}
	\limsup_{g \to \infty} t_g\vphi^{-g}
	\leq \frac{\vphi}{\sqrt 5} + 
	   \frac{1}{\sqrt 5} \sum_{k = 1}^{k_M} \sum_{A \in \c A_k} \vphi^{ - \abs{(A + A)\cap[0, k]} + \abs{A} - k - 1} \\
	   +  \frac{2}{\sqrt 5} \sum_{k > k_M} \sum_{A \in \c A_k} \vphi^{ - \abs{(A + A)\cap[0, k]} + \abs{A} - k - 1}.
\end{multline}
This is true for arbitrarily large $k_M$. Since we are in the case where we assume Conjecture \ref{conj:sum-converge}, the third term in right-hand side of \eqref{eq:t_g-upper3} goes to zero as $k_M \to \infty$. Therefore, letting $k_M \to \infty$, \eqref{eq:t_g-upper3} implies that
\[
	\limsup_{g \to \infty} t_g\vphi^{-g} \leq \frac{\vphi}{\sqrt 5} + 
	\frac{1}{\sqrt 5} \sum_{k = 1}^{\infty} \sum_{A \in \c A_k} \vphi^{ - \abs{(A + A)\cap[0, k]} + \abs{A} - k - 1}. \qedhere
\]
\end{proof}

Combining Proposition \ref{prop:t_g-lower-asymp} and Lemma \ref{lem:t_g-upper2}, we obtain our main result, which gives the asymptotics for $t_g$.

\begin{theorem} \label{thm:t_g}
Let $t_g$ denote the number of numerical semigroups $\Lambda$ of genus $g$ that satisfy $f(\Lambda) < 3m(\Lambda)$. Then
\[
	\lim_{g \to \infty} t_g \vphi^{-g} 
	=
	\frac{\vphi}{\sqrt 5} + 
	\frac{1}{\sqrt 5} \sum_{k = 1}^{\infty} \sum_{A \in \c A_k} \vphi^{ - \abs{(A + A)\cap[0, k]} + \abs{A} - k - 1}.
\]
\end{theorem}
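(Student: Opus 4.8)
The plan is to combine the two asymptotic bounds already established: the $\liminf$ lower bound from Proposition~\ref{prop:t_g-lower-asymp} and the $\limsup$ upper bound from Lemma~\ref{lem:t_g-upper2}. Both bounds have the identical right-hand side, namely
\[
	L := \frac{\vphi}{\sqrt 5} + \frac{1}{\sqrt 5} \sum_{k = 1}^{\infty} \sum_{A \in \c A_k} \vphi^{ - \abs{(A + A)\cap[0, k]} + \abs{A} - k - 1}.
\]
So the skeleton of the argument is simply: Proposition~\ref{prop:t_g-lower-asymp} gives $\liminf_{g \to \infty} t_g \vphi^{-g} \geq L$, Lemma~\ref{lem:t_g-upper2} gives $\limsup_{g \to \infty} t_g \vphi^{-g} \leq L$, and since $\liminf \leq \limsup$ always, we conclude that both equal $L$ and hence $\lim_{g\to\infty} t_g\vphi^{-g}$ exists and equals $L$.

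The one subtlety worth spelling out is the dichotomy on whether $L$ is finite. If the series defining $L$ diverges (i.e., if Conjecture~\ref{conj:sum-converge} fails), then $L = +\infty$; in that case Lemma~\ref{lem:t_g-upper2} is vacuous, but Proposition~\ref{prop:t_g-lower-asymp} still forces $\liminf_{g\to\infty} t_g\vphi^{-g} = +\infty$, so $\lim_{g\to\infty} t_g\vphi^{-g} = +\infty = L$ and the asserted equality holds in the extended reals. If instead the series converges, then $L$ is finite, both inequalities are meaningful, and $\liminf \le \limsup$ pins the limit to $L$ exactly. Either way the equality in the theorem statement is valid, with the understanding that it may be an equality of $+\infty$'s; the phrasing of the theorem implicitly reads this way, and in the text following the statement the paper can note that Conjecture~\ref{conj:sum-converge} (if true) makes $L$ finite.

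There is essentially no obstacle here: the substantive work — the explicit construction of numerical semigroups by type (Proposition~\ref{prop:construction}), the counting by genus and type (Corollary~\ref{cor:type-m-count} and Proposition~\ref{prop:type-count}), and the two one-sided asymptotic estimates — has already been carried out. The only thing requiring a moment's care is making sure the lower-bound and upper-bound right-hand sides are literally the same expression (they are, by inspection of \eqref{eq:t_g-lower3} and \eqref{eq:t_g-upper2}), and handling the $L = \infty$ case cleanly. Thus the proof is a two-line citation of the preceding results plus the trivial inequality $\liminf \le \limsup$, and I would write it as such.

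\begin{proof}
By Proposition~\ref{prop:t_g-lower-asymp},
\[
	\liminf_{g \to \infty} t_g \vphi^{-g}
	\geq
	\frac{\vphi}{\sqrt 5} +
	\frac{1}{\sqrt 5} \sum_{k = 1}^{\infty} \sum_{A \in \c A_k} \vphi^{ - \abs{(A + A)\cap[0, k]} + \abs{A} - k - 1},
\]
and by Lemma~\ref{lem:t_g-upper2},
\[
	\limsup_{g \to \infty} t_g \vphi^{-g}
	\leq
	\frac{\vphi}{\sqrt 5} +
	\frac{1}{\sqrt 5} \sum_{k = 1}^{\infty} \sum_{A \in \c A_k} \vphi^{ - \abs{(A + A)\cap[0, k]} + \abs{A} - k - 1}.
\]
Since $\liminf_{g \to \infty} t_g \vphi^{-g} \leq \limsup_{g \to \infty} t_g \vphi^{-g}$, all three quantities are equal, and in particular the limit $\lim_{g \to \infty} t_g \vphi^{-g}$ exists and equals the common value. (If the series on the right-hand side diverges, then the lower bound already forces $\lim_{g \to \infty} t_g \vphi^{-g} = \infty$, and the stated equality holds in the extended reals; if Conjecture~\ref{conj:sum-converge} holds, the common value is finite.)
\end{proof}
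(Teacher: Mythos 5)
Your proof is correct and is exactly the paper's argument: the theorem is stated immediately after the sentence ``Combining Proposition~\ref{prop:t_g-lower-asymp} and Lemma~\ref{lem:t_g-upper2}, we obtain our main result,'' so the intended proof is precisely the two-line combination of the matching $\liminf$ and $\limsup$ bounds. Your extra care about the possibly infinite value of the series is a reasonable (and harmless) addition that the paper leaves implicit.
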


Using the computed values in Table \ref{tab:sum}, we obtain the following result about the number of numerical semigroups of a given genus.

\begin{proposition} \label{prop:numerical-bound}
Let $n_g$ be the number of semigroups of genus $g$, and $t_g$ the number of numerical semigroups of genus $g$ satisfying $f < 3m$. Then
\[
	 \liminf_{g \to \infty} n_g \vphi^{-g} 
			\geq \lim_{g \to \infty} t_g \vphi^{-g}
			> 3.78.
\]
\end{proposition}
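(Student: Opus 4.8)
The plan is to split the statement into its two inequalities: the first, $\liminf_{g\to\infty} n_g\vphi^{-g}\ge\lim_{g\to\infty} t_g\vphi^{-g}$, is essentially a tautology, and the second, $\lim_{g\to\infty} t_g\vphi^{-g}>3.78$, is a finite computation. For the first inequality, recall that $t_g$ counts precisely those numerical semigroups of genus $g$ that additionally satisfy $f(\Lambda)<3m(\Lambda)$, so trivially $t_g\le n_g$ for every $g$. Dividing by $\vphi^g$ and taking $\liminf$ gives $\liminf_{g\to\infty} n_g\vphi^{-g}\ge\liminf_{g\to\infty} t_g\vphi^{-g}$, and by Theorem~\ref{thm:t_g} the latter $\liminf$ is a genuine limit equal to the series displayed there (should that series diverge, then $\lim_{g\to\infty}t_g\vphi^{-g}=+\infty$ and the proposition is trivial, so one may as well assume convergence). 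Chaining these gives $\liminf_{g\to\infty} n_g\vphi^{-g}\ge\lim_{g\to\infty} t_g\vphi^{-g}$.

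For the numerical lower bound I would exploit the fact that every summand of the series in Theorem~\ref{thm:t_g} is a power of $\vphi$, hence strictly positive, so the full series dominates each of its partial sums. One need not even invoke Theorem~\ref{thm:t_g} itself: inequality~\eqref{eq:t_g-lower2} from the proof of Proposition~\ref{prop:t_g-lower-asymp} already yields, for every fixed $k_M$,
\[
	\liminf_{g\to\infty} t_g\vphi^{-g}\;\ge\;\frac{\vphi}{\sqrt5}+\frac1{\sqrt5}\sum_{k=1}^{k_M}\sum_{A\in\c A_k}\vphi^{-\abs{(A+A)\cap[0,k]}+\abs{A}-k-1}.
\]
Taking $k_M=46$, the right-hand side is exactly the last tabulated partial sum in Table~\ref{tab:sum}, approximately $3.78318$, which exceeds $3.78$. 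The value $k_M=46$ is forced by the precision required: the $k_M=45$ partial sum is only about $3.77725$. Combining this with the first inequality produces the stated bound.

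The one point that genuinely needs care — and the \emph{main obstacle} — is certifying that the partial sum at $k_M=46$ is truly larger than $3.78$, rather than merely appearing so in Table~\ref{tab:sum}. For each $k\le 46$ one enumerates $\c A_k$ (a set $A\in\c A_k$ is $\{0\}$ together with, for each pair $\{x,k-x\}$ with $1\le x<k/2$, a choice of $x$, of $k-x$, or of neither, with $k/2\notin A$ when $k$ is even, which reproves $\abs{\c A_k}=3^{\floor{(k-1)/2}}$), computes $\abs{A}$ and $\abs{(A+A)\cap[0,k]}$, and accumulates $\vphi$ raised to the indicated exponent. To make this rigorous one should work in exact arithmetic in $\ZZ[\vphi]$ (each power $\vphi^n$, $n\in\ZZ$, equals $F_n\vphi+F_{n-1}$ with Fibonacci extended to negative indices) and compare the resulting element of $\QQ[\vphi]$ with $3.78=\tfrac{189}{50}$, or else run the whole computation in interval arithmetic; given the comfortable gap between $3.78318$ and $3.78$, even a crude round-off analysis of the floating-point entries in Table~\ref{tab:sum} suffices. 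The $k=46$ stage involves $3^{22}\approx 3\times 10^{10}$ sets, which is large but entirely feasible, and can be shortened considerably by grouping the $A$'s by the pair $(\abs{A},\abs{(A+A)\cap[0,k]})$ rather than listing them individually. Beyond this finite verification there is no further mathematical content: the proposition is Theorem~\ref{thm:t_g}, positivity of its summands, and a computation.
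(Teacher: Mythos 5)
Your proposal is correct and matches the paper's own proof: the first inequality comes from the trivial bound $t_g \leq n_g$ together with Theorem \ref{thm:t_g}, and the second from applying \eqref{eq:t_g-lower2} with $k_M = 46$ and reading off the partial sum from Table \ref{tab:sum}. Your additional remarks on certifying the numerical value (exact arithmetic in $\ZZ[\vphi]$, grouping the sets $A$ by the pair $(\abs{A}, \abs{(A+A)\cap[0,k]})$) go beyond what the paper writes down but do not change the argument.
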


\begin{proof}
The first inequality follows from $n_g \geq t_g$, which is true by definition. For the second inequality, apply \eqref{eq:t_g-lower2} with $k_M = 46$ (see Table \ref{tab:sum} for computed values of the sum).
\end{proof}

\begin{remark}
The quantity $k_M$ used throughout this section can be viewed as a computation parameter that is unrelated to the genus $g$. We can obtain better numerical lower bounds by increasing $k_M$, though the cost of computation increases exponentially fast with $k_M$. In particular, the convergence rate of the right hand side of \eqref{eq:t_g-lower2} as $k_M \to \infty$ is unrelated to the convergence rate of $t_g \vphi^{-g}$ as $g \to \infty$. In fact, we do not even know whether $t_g \vphi^{-g}$ is increasing. In other words, our approach analyzes $n_g \vphi^{-g}$ indirectly indexing over the types as opposed to the genus.  The plot in Figure \ref{fig:ratio-plot} illustrates that our approach for computing a lower bound to $\liminf n_g \vphi^{-g}$  ``sees further'' than the actual computed values of $n_g$ or $t_g$.
\end{remark}

\section{Numerical semigroups with $f > 3m$} \label{sec:f>3m}

The methods presented in the previous sections do not readily extend to analyzing the numerical semigroups $\Lambda$ with $f(\Lambda) > 3 m(\Lambda)$. We were unable to construct large families (i.e., with cardinality at least $c\vphi^g$ for some constant $c$) of such numerical semigroups as we did in Sections \ref{sec:f<2m} and \ref{sec:f<3m}. See Table \ref{tab:t_g} and Figure \ref{fig:ratio-plot2} for some data on the proportion $t_g/n_g$ of numerical semigroups of a given genus satisfying $f < 3m$. Based on naive extrapolation of the data assuming that $t_g/n_g$ has roughly geometrically decreasing increments, we conjecture that $t_g/n_g$ approaches $1$ as $g \to \infty$. In other words, we believe that the numerical semigroups satisfying $f > 3m$ occupy an asymptotically negligible proportion of all numerical semigroups of a given genus $g$, for $g$ large.

\begin{conjecture} \label{conj:f>3m}
Let $n_g$ be the number of numerical semigroups of genus $g$, and $t_g$ the number of numerical semigroups $\Lambda$ of genus $g$ satisfying $f(\Lambda) < 3m(\Lambda)$. Then $t_g/n_g \to 1$ as $g \to \infty$.
\end{conjecture}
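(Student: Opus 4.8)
The plan is to reduce the conjecture to a single counting estimate and then to attack it by pushing the ``type'' machinery of Section~\ref{sec:f<3m} into higher blocks. Since $t_g \le n_g$ always, the statement $t_g/n_g \to 1$ is equivalent to $u_g := n_g - t_g = o(n_g)$, where $u_g$ is the number of numerical semigroups of genus $g$ with $f > 3m$ (note that $f = 3m$ is impossible, as $3m = m+m+m \in \Lambda$). By Proposition~\ref{prop:f<2m}, $t_g \ge F_{g+1} = \Theta(\vphi^g)$, so it suffices to prove
\[
	\#\{\Lambda : g(\Lambda) = g,\ f(\Lambda) > 3m(\Lambda)\} = o(\vphi^g).
\]

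First I would introduce a ``generalized type'' for a semigroup $\Lambda$ with $jm < f < (j+1)m$ and $j \ge 3$: set $k = f - 2m$ and $A = (\Lambda \cap [m, m+k]) - m$, so that $0 \in A$ and $k \notin A + A$. The construction and proof of Proposition~\ref{prop:construction} never actually use the hypothesis $m > k$, so for any $m$ the numerical semigroups of multiplicity $m$ and type $(A;k)$ are still exactly the sets~\eqref{eq:constr2}, counted by a single binomial coefficient as in Corollary~\ref{cor:type-m-count}. The obstruction is that when $k \ge m$ the window $[m, m+k]$ spans more than one block, so $A$ must itself satisfy closure constraints involving $m$: for instance $m \in A$ is forced, and more generally $m + x \in A$ whenever $x \in (A+A)\cap[0, k-m]$. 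Hence the admissible $A$'s no longer decouple from $m$, and the final step of Proposition~\ref{prop:type-count} — summing the binomial over all $m$ via the Fibonacci identity~\eqref{eq:fib} to obtain one Fibonacci number per type — breaks down; the count of genus-$g$ semigroups with $f$ in block $j$ becomes a genuine double sum $\sum_m \sum_{A \text{ admissible for } m} \binom{\cdot}{\cdot}$ that must be bounded directly.

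The main obstacle, and the reason this is still only a conjecture, is to prove that $\sum_{j\ge 3} \#\{\Lambda : g(\Lambda)=g,\ jm<f<(j+1)m\} = o(\vphi^g)$. This is delicate: the regimes $f < 2m$ and $2m < f < 3m$ contribute $F_{g+1}$ and (e.g.\ the $k=1$ term of Lemma~\ref{lem:t_g-lower}) at least $F_{g-2}$, both of order $\vphi^g$, so a naive extrapolation would predict that the regime $3m < f < 4m$, and in fact every regime $jm < f < (j+1)m$, also contributes order $\vphi^g$ — which would force $n_g/\vphi^g \to \infty$, contradicting Conjecture~\ref{conj:ratio}. One must therefore establish the reverse: the $m$-dependent closure constraints above grow \emph{quantitatively} more restrictive as $f$ moves into higher blocks, forcing enough elements of $\Lambda$ into the lower blocks $[m, 2m), \dots, [(j-1)m, jm)$ that the number of free choices (the set $B$) shrinks, so that block $j$ contributes $o(\vphi^g)$, and with enough uniformity in $j$ that the whole tail $j \ge 3$ remains $o(\vphi^g)$.

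I expect the per-block, per-type bookkeeping to be mechanical but lengthy; the genuine difficulty is the uniform control over all higher blocks, which I suspect requires an ingredient absent from the present approach — at the very least a nontrivial \emph{upper} bound on $n_g$ of order $\vphi^g$ (the best known, $n_g \le 1 + 3\cdot 2^{g-3}$, is exponentially too large), perhaps obtained from a ``compression'' map that lowers the genus of a semigroup whose Frobenius number sits in a high block, or from the Kunz/Ap\'ery-coordinate polytope description of~\cite{BP} together with a lattice-point estimate showing the relevant polytopes are low-dimensional or thin. Such an upper bound would by itself settle Conjecture~\ref{conj:sum-converge} via $t_g \le n_g$ and Proposition~\ref{prop:t_g-lower-asymp}; the conjecture at hand is the finer statement that $n_g - t_g$ is not merely $O(\vphi^g)$ but $o(\vphi^g)$.
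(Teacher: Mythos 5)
The statement you are addressing is Conjecture \ref{conj:f>3m}; the paper does not prove it, and offers only the numerical evidence of Table \ref{tab:t_g} and Figure \ref{fig:ratio-plot2} (the observed values of $t_g/n_g$ up to $g=50$, together with a heuristic extrapolation assuming geometrically decreasing increments). Your write-up is accordingly not a proof but a research plan, and you are candid about this. The reduction you carry out is correct and worth recording: since $t_g \geq F_{g+1} = \Theta(\vphi^g)$ by Proposition \ref{prop:f<2m} and $t_g \leq n_g$, the conjecture is equivalent to showing that the number $u_g$ of genus-$g$ semigroups with $f > 3m$ satisfies $u_g = o(n_g)$, and it would suffice to show $u_g = o(\vphi^g)$. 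Your observations about extending the type machinery --- that the construction of Proposition \ref{prop:construction} formally persists for $k \geq m$ but the admissible sets $A$ then acquire $m$-dependent closure constraints, so the Fibonacci identity \eqref{eq:fib} no longer collapses the sum over $m$ into a single term per type --- are accurate diagnoses of why the paper's method stops at $f < 3m$ (compare the opening of Section \ref{sec:f>3m}, where the author says exactly that the method does not readily extend).

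The genuine gap is that no argument, even in outline, is supplied for the key estimate $u_g = o(\vphi^g)$, nor for the weaker bound $u_g = O(\vphi^g)$ that would already be needed to keep the higher blocks from dominating. You correctly point out that the first two blocks each contribute $\Theta(\vphi^g)$, so some quantitative mechanism must force the contribution to decay with the block index $j$, uniformly enough that the tail over all $j \geq 3$ stays negligible; but identifying that such a mechanism is needed is not the same as exhibiting one. Your suggested ingredients (a genus-lowering compression map, or a lattice-point estimate via the polytope description of \cite{BP}) are plausible directions, but neither is developed, and as you note the best known upper bound $n_g \leq 1 + 3\cdot 2^{g-3}$ is exponentially too weak to substitute. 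The proposal should therefore be read as a correct framing of an open problem rather than a proof; there is no proof in the paper to compare it against, since the paper leaves the statement open as well.
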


Conjecture \ref{conj:sum-converge} would imply that $\lim t_g \vphi^{-g}$ is finite, and Conjecture \ref{conj:f>3m} would imply that $\lim n_g \vphi^{-g} = \lim t_g \vphi^{-g}$, so the two Conjectures together would imply Conjecture \ref{conj:ratio} and hence also Conjecture \ref{conj:B}. In fact, we only need a weaker form of Conjecture \ref{conj:f>3m} saying that $t_g/n_g$ approaches some positive limit.

\begin{table}[ht!]
\caption{Some data on the number $n_g$ of numerical semigroups of genus $g$, and the number $t_g$ of numerical semigroups of genus $g$ satisfying $f < 3m$. \label{tab:t_g}}
\footnotesize
\begin{tabular}{rrrlllc@{\qquad}rrrlll}
\toprule
\multicolumn{1}{c}{$g$} & \multicolumn{1}{c}{$n_g$} & \multicolumn{1}{c}{$t_g$} & \multicolumn{1}{c}{$n_g\vphi^{-g}$} &  \multicolumn{1}{c}{$t_g\vphi^{-g}$} & \multicolumn{1}{c}{$t_g/n_g$} &  &
\multicolumn{1}{c}{$g$} & \multicolumn{1}{c}{$n_g$} & \multicolumn{1}{c}{$t_g$} & \multicolumn{1}{c}{$n_g\vphi^{-g}$} &  \multicolumn{1}{c}{$t_g\vphi^{-g}$} & \multicolumn{1}{c}{$t_g/n_g$} \\
\midrule
1 & 1 & 1 & 0.61803 & 0.61803 & 1.00000 &  & 26 & 770832 & 653420 & 2.83976 & 2.40721 & 0.84768 \\
2 & 2 & 2 & 0.76393 & 0.76393 & 1.00000 &  & 27 & 1270267 & 1080981 & 2.89220 & 2.46123 & 0.85099 \\
3 & 4 & 4 & 0.94427 & 0.94427 & 1.00000 &  & 28 & 2091030 & 1786328 & 2.94243 & 2.51366 & 0.85428 \\
4 & 7 & 6 & 1.02129 & 0.87539 & 0.85714 &  & 29 & 3437839 & 2948836 & 2.98981 & 2.56454 & 0.85776 \\
5 & 12 & 11 & 1.08204 & 0.99187 & 0.91667 &  & 30 & 5646773 & 4863266 & 3.03509 & 2.61396 & 0.86125 \\
6 & 23 & 20 & 1.28175 & 1.11456 & 0.86957 &  & 31 & 9266788 & 8013802 & 3.07831 & 2.66208 & 0.86479 \\
7 & 39 & 33 & 1.34323 & 1.13658 & 0.84615 &  & 32 & 15195070 & 13194529 & 3.11960 & 2.70888 & 0.86834 \\
8 & 67 & 57 & 1.42618 & 1.21332 & 0.85075 &  & 33 & 24896206 & 21707242 & 3.15894 & 2.75431 & 0.87191 \\
9 & 118 & 99 & 1.55236 & 1.30241 & 0.83898 &  & 34 & 40761087 & 35684639 & 3.19644 & 2.79835 & 0.87546 \\
10 & 204 & 168 & 1.65865 & 1.36594 & 0.82353 &  & 35 & 66687201 & 58618136 & 3.23203 & 2.84096 & 0.87900 \\
11 & 343 & 287 & 1.72357 & 1.44217 & 0.83673 &  & 36 & 109032500 & 96221845 & 3.26589 & 2.88216 & 0.88251 \\
12 & 592 & 487 & 1.83853 & 1.51244 & 0.82264 &  & 37 & 178158289 & 157840886 & 3.29810 & 2.92198 & 0.88596 \\
13 & 1001 & 824 & 1.92130 & 1.58157 & 0.82318 &  & 38 & 290939807 & 258749944 & 3.32869 & 2.96040 & 0.88936 \\
14 & 1693 & 1395 & 2.00831 & 1.65481 & 0.82398 &  & 39 & 474851445 & 423906805 & 3.35768 & 2.99745 & 0.89271 \\
15 & 2857 & 2351 & 2.09457 & 1.72361 & 0.82289 &  & 40 & 774614284 & 694076610 & 3.38517 & 3.03321 & 0.89603 \\
16 & 4806 & 3954 & 2.17762 & 1.79157 & 0.82272 &  & 41 & 1262992840 & 1135816798 & 3.41120 & 3.06772 & 0.89931 \\
17 & 8045 & 6636 & 2.25287 & 1.85830 & 0.82486 &  & 42 & 2058356522 & 1857750672 & 3.43589 & 3.10103 & 0.90254 \\
18 & 13467 & 11116 & 2.33074 & 1.92385 & 0.82543 &  & 43 & 3353191846 & 3037078893 & 3.45931 & 3.13320 & 0.90573 \\
19 & 22464 & 18593 & 2.40282 & 1.98877 & 0.82768 &  & 44 & 5460401576 & 4962738376 & 3.48152 & 3.16421 & 0.90886 \\
20 & 37396 & 31042 & 2.47214 & 2.05209 & 0.83009 &  & 45 & 8888486816 & 8105674930 & 3.50255 & 3.19408 & 0.91193 \\
21 & 62194 & 51780 & 2.54102 & 2.11554 & 0.83256 &  & 46 & 14463633648 & 13233250642 & 3.52246 & 3.22281 & 0.91493 \\
22 & 103246 & 86223 & 2.60702 & 2.17718 & 0.83512 &  & 47 & 23527845502 & 21595419304 & 3.54130 & 3.25044 & 0.91787 \\
23 & 170963 & 143317 & 2.66800 & 2.23657 & 0.83829 &  & 48 & 38260496374 & 35227607540 & 3.55913 & 3.27700 & 0.92073 \\
24 & 282828 & 237936 & 2.72784 & 2.29486 & 0.84127 &  & 49 & 62200036752 & 57443335681 & 3.57599 & 3.30252 & 0.92353 \\
25 & 467224 & 394532 & 2.78506 & 2.35175 & 0.84442 &  & 50 & 101090300128 & 93635242237 & 3.59193 & 3.32703 & 0.92625 \\
\bottomrule
\end{tabular}
\end{table}

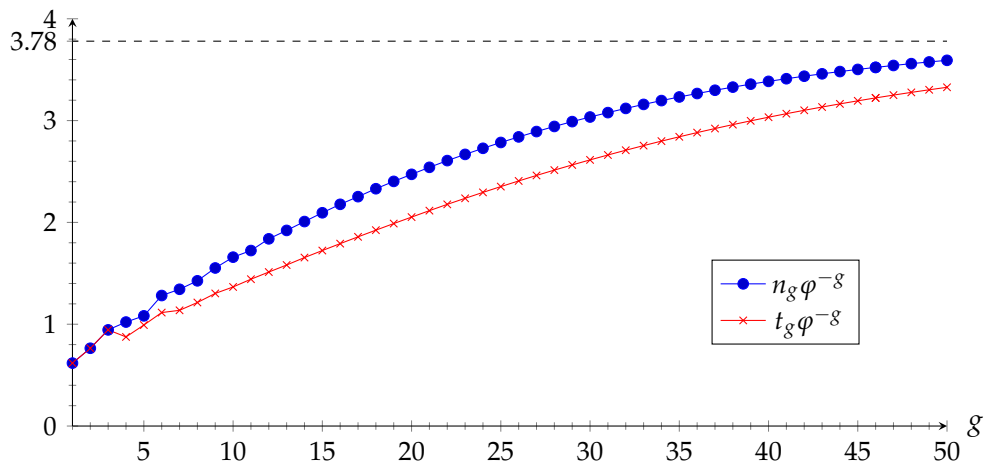
\begin{figure}[ht!] \centering
\begin{tikzpicture}
\begin{axis}[width=.8\textwidth, height = 7cm, enlargelimits=false,
			 xmin = 1, ymin = 0, ymax=4, minor tick num = 4,
		     axis x line = bottom, axis y line = left,
		     extra y ticks={3.78},
			 extra y tick labels={$3.78$},
			 legend style={ at={(.9,.2)}, anchor=south east},
			 font = \small,
			 name=plot]
	\addplot table[x=g,y=A] {plotdata.txt};
	\addlegendentry{$n_g \vphi^{-g}$}
	\addplot[mark=x, color=red] table[x=g,y=B] {plotdata.txt};
	\addlegendentry{$t_g \vphi^{-g}$}
	\addplot[dashed] coordinates { (0,3.78) (50, 3.78)};
\end{axis}

\node[label=right:$g$] at (plot.right of origin) {};

\end{tikzpicture}
\caption{Plot of $n_g \vphi^{-g}$ and $t_g \vphi^{-g}$ from Table \ref{tab:t_g}. Proposition \ref{prop:numerical-bound} shows that $3.78$ is an eventual \emph{lower bound} to both sequences.\label{fig:ratio-plot}}
\end{figure}

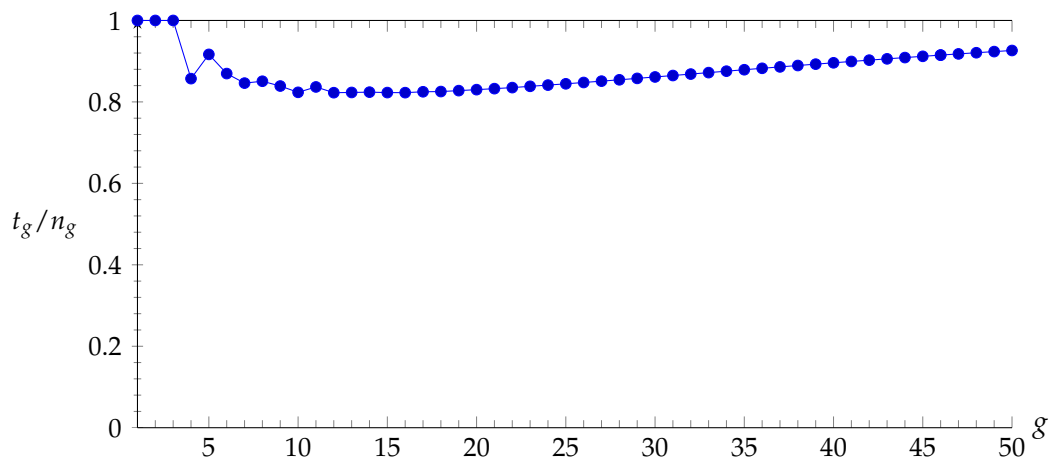
\begin{figure}[ht!] \centering
\begin{tikzpicture}
\begin{axis}[width=.8\textwidth, height = 7cm, enlargelimits=false,
			 xmin = 1, ymin = 0, ymax=1, minor tick num = 4,
			 ylabel = \rotatebox{-90}{$t_g/n_g$},
		     axis x line = box, axis y line = left,
		     extra y ticks={3.78},
			 extra y tick labels={$3.78$},
			 legend style={ at={(.9,.2)}, anchor=south east},
			 font = \small,
			 name=plot]
	\addplot table[x=g,y=R] {plotdata.txt};
\end{axis}

\node[label=right:$g$] at (plot.right of origin) {};

\end{tikzpicture}
\caption{Plot of $t_g/n_g$ from Table \ref{tab:t_g}.\label{fig:ratio-plot2}}
\end{figure}

\section*{Acknowledgments}

The author would like to thank Nathan Kaplan for introducing him to the problem and for helpful discussions, Joseph Gallian for proofreading the paper, and Maria Bras-Amor{\'o}s for providing the data in Table \ref{tab:t_g}.

\bibliographystyle{amsplain}
\bibliography{ref-semigroup}

\end{document}